\documentclass{article}

\usepackage{subcaption}
\usepackage{graphicx}%
\usepackage{multirow}%
\usepackage{amsmath,amssymb,amsfonts}%
\usepackage{fullpage}
\usepackage{float}
\usepackage{url}      
\usepackage{mathrsfs}%
\usepackage{xcolor}%
\usepackage{textcomp}%
\usepackage{manyfoot}%
\usepackage{booktabs}%
\usepackage{algorithm}%
\usepackage{algorithmicx}%
\usepackage{algpseudocode}%
\usepackage{listings}%
\usepackage{tikz}
\usepackage{hyperref} 
\usetikzlibrary{calc,angles,quotes}
\usepackage{caption}
\newtheorem{thm}{Theorem}[section]
\newtheorem{theorem}[thm]{Theorem}

\newtheorem{lemma}[thm]{Lemma}

\newtheorem{proposition}[thm]{Proposition}

\newtheorem{definition}[thm]{Definition}

\newtheorem{remark}[thm]{Remark}

\newcommand{\beq}{\begin{equation}}
\newcommand{\eeq}{\end{equation}}
\newcommand{\beqa}{\begin{eqnarray}}
\newcommand{\eeqa}{\end{eqnarray}}
\newcommand{\beqas}{\begin{eqnarray*}}
\newcommand{\eeqas}{\end{eqnarray*}}
\newcommand{\bi}{\begin{itemize}}
\newcommand{\ei}{\end{itemize}}

\newcommand{\R}{\mathbb{R}}

\newcommand{\argmin}{\mathrm{argmin}}

\newcommand{\ignore}[1]{}

\title{The Method of Ellipcenters for strongly convex minimization}

\date{}

\begin{document}

\maketitle

\begin{center}
\begin{tabular}{ccc}
\begin{tabular}{c}
Roger Behling\\
Department of Mathematics, UFSC\\
Blumenau, SC, Brazil\\
{\tt rogerbehling@gmail.com}
\end{tabular}
&
&
\begin{tabular}{c}
Ramyro Corr\^ea Aquines\\
School of Applied Mathematics, FGV\\
Praia de Botafogo, Rio de Janeiro, Brazil\\
{\tt ramyrocorrea@gmail.com}
\end{tabular}\\
&&\\
\begin{tabular}{c}
Eduarda Ferreira Zanatta\\
Department of Mathematics, UFSC\\
Blumenau, SC, Brazil\\
{\tt eduardazanatta6@gmail.com}
\end{tabular}
&
&
\begin{tabular}{c}
Vincent Guigues\\
School of Applied Mathematics, FGV\\
Praia de Botafogo, Rio de Janeiro, Brazil\\
{\tt vincent.guigues@fgv.br}
\end{tabular}
\end{tabular}
\end{center}

\begin{abstract}
This work is about ME, the Method of Ellipcenters. ME was recently introduced by these very authors as a first order accelerated scheme for unconstrained minimization. Its iterates are all centers of ellipses carefully designed to somehow capture ill-conditioning of the underlying optimization problem. In the first article on ME, we were able to prove that it converges with linear rate when the objective function is quadratic and strongly convex, while here we derive convergence for any differentiable strongly convex objective. This investigation was inspired by the great performance of ME in quadratic minimization against steepest descent with exact line search, FISTA, Barzilai-Borwein and Conjugate Gradient. The experiments we carry out now, make ME even more attractive from the numerical point of view. On top of that, the theory seems promising for quite more general settings.
\end{abstract}

\par {\textbf{Keywords:}}
unconstrained optimization, first-order methods, gradient methods,
ellipcenter method, strongly convex minimization.\\

% OPTIONAL (mas recomendável)

\section{Introduction}

In this paper, we propose a new solution method for the problem
\begin{equation}\label{defpb}
x^* = \argmin \{f(x):x \in \mathbb{R}^n\}
\end{equation}
where $f:\mathbb{R}^n \rightarrow \mathbb{R}$ is strongly convex and differentiable.
In \cite{ourME}, the Method of Ellipcenters (ME) was studied
for problems of form
\eqref{defpb} with $f$ quadratic and strongly convex. In this paper, we consider the more general case of an arbitrary strongly convex differentiable function $f$.
We start recalling the definition of a strongly convex function.
\begin{definition} Function $f:\mathbb{R}^n \rightarrow \mathbb{R}$
is strongly convex with constant of strong convexity $\mu>0$ for norm $\|\cdot\|$ if for every $x,y \in \mathbb{R}^n$, for every $0 \leq t \leq 1$, we have 
\begin{equation}\label{defsc}
f(tx+(1-t)y) \leq tf(x) + (1-t)f(y)-\frac{\mu t(1-t)}{2}\|y-x\|^2.
\end{equation}
\end{definition}

It is well known that if $f:\mathbb{R}^n \rightarrow \mathbb{R}$ is differentiable then $f$ is strongly convex 
with constant of strong convexity $\mu>0$ for $\|\cdot\|$
if, and only if, 
$\nabla f$ is $\mu$-strongly monotone, i.e.,
\begin{equation}\label{scprop1}
\mu \|x-y\|^2 \leq 
\langle  \nabla f(y)-\nabla f(x),y-x\rangle
\end{equation}
for all $x,y \in \mathbb{R}^n$.

ME generates a sequence of
points $(x^k)$ ($x^{k+1}$ is computed at iteration $k$) and stops 
with an optimal solution
$x^k$
when
$\nabla f(x^k)=0$.
Otherwise, we are at a non-stationary iterate $x^k$. In this case, an auxiliary point 
$y^k:=x^k-t_k\nabla f(x^k)$ is computed, where $t_k>0$ is such that $f(x^k)=f(y^k)$. Having found $y^k$, we compute the gradient $\nabla f(y^k)$ and if it is a multiple of $\nabla f(x^k)$, ME returns 
$$
x^{k+1}=\frac{1}{2}\Big(x^k + y^k\Big),
$$ as the next iterate. Otherwise, we define the two-dimensional affine space
\begin{equation}\label{defpik}
\Pi_k:=\Big\{x\in \mathbb{R}^n :x=x^k+\mbox{span}\{\nabla f(x^k),\nabla f(y^k)\}\Big\}
\end{equation}
and build an elliptical curve
$E_k$ having the following three properties:
\begin{description}
   \item[ME1] $E_k$ is an ellipse contained in $\Pi_k$;
   \item[ME2] $E_k$ is orthogonal to $\nabla f(x^k)$ at $x^k$;
   \item[ME3] $E_k$ is orthogonal to $\nabla f(y^k)$ at $y^k$.
\end{description}
We will see that the set of centers of ellipses $E_K$ satisfying the above conditions might not be unique. It will be proven that these centers form a semi-line starting from $\frac{1}{2}(x^k + y^k)$. That said, the next iterate $x^{k+1}$ is then defined as the point with smallest function value regarding $f$ along this semi-line.

Getting $x^{k+1}$, such as above, can be seen as an exact generalization of what has been done in \cite{ourME} for strictly convex quadratics.

The whole philosophy of ME, as explained in its debut in \cite{ourME}, is to encapsulate the solution of \eqref{defpb} and to identify ill-conditioning by building appropriate ellipses related to level curves of $f$. Since these ellipses are constructed upon two gradients per iteration, ME consists of a first order method. Nevertheless, ME becomes Newton in two-dimensional spaces when $f$ is quadratic. The Newtonian flavor was captured even in higher dimensions in the experiments in \cite{ourME} and there is a reason why our two-dimensional ellipses will provide much more than just double the gain of the one-dimensional line search iteration of the gradient method. It is known that the zigzagging mesh generated by the gradient method steps converges to a two dimensional subspace (which is related to smallest and largest eigenvalues of Hessians for twice differrentiable functions). Bearing this in mind as a motivation, we were confident when developing ME in the first paper and are still motivated now by its extension to strongly convex minimization. The preliminary experiments are also quite exciting.

\iffalse
and the details of the computation of the center of the ellipse was given for $f$ quadratic.
In this paper, we consider the general strongly convex case.

Accelerating the gradient method with first-order tools has long been a popular subject of research in the field of Continuous Optimization, see for instance \cite{grimmer2023optimal,lan2015bundle,mishchenko2020adaptive,nesterov2015universal,renegar2022simple} and \cite{zhou2024adabb}. Perhaps one of the most famous of these methods is given in the paper \cite{nesterov1983} by Y. Nesterov (fine tuning
of this algorithm is discussed in \cite{gonzagakaras}). He embedded inertia in the gradient method and was able to derive an iteration with best possible complexity for a first-order method, \textit{i.e.}, a method using first derivatives only. Another first-order algorithm that enjoys optimal complexity in theory is the Ellipsoid method \cite{Khachiyan1979}. Nevertheless, in practice, the Ellipsoid method, which is different from what we are proposing here, is not attractive.
\fi

Our paper is organized as follows.
In Section \ref{sec:algo}, we rewrite ME in a rather direct and computational manner.
In Section \ref{sec:geometry} we study properties of the
method, prove consistence of our rewrite regarding how we defined ME in the Introduction and also show that descent is ensured at every iteration.
In Section \ref{sec:conv}, we prove convergence of ME for the minimization of strongly convex differentiable functions
while in Section
\ref{sec:num} we provide the results of preliminary numerical experiments
comparing ME with other popular methods for convex minimization.
Finally, concluding remarks can be found in the final
Section \ref{sec:conc}.

\section{Algorithm}\label{sec:algo}

In this section, we present two algorithms that generate iterates $(x^k)$ as explained in the introduction, expressing every new iterate $x^{k+1}$ at iteration $k$ as the center of an ellipse
satisfying ME1, ME2, and ME3. At every iteration, the set of centers of the ellipses satisfying ME1, ME2, and ME3 is a semiline (the proof of this property is shown in Section 
\ref{sec:geometry}). For  the first algorithm, we do a line search to find a point
in this semiline allowing to decrease the
value of the objective at that iteration.
For the second algorithm, we minimize the
function on this semiline.
It will be shown in Section \ref{sec:conv} that these
two algorithms converge to $x^*$, an optimal solution to problem \eqref{defpb}.

\noindent\rule[0.5ex]{1\columnwidth}{1pt}
	Method of ellipcenters for strongly convex functions with line search.\\
    \noindent\rule[0.5ex]{1\columnwidth}{1pt}
    {\bf Inputs:} Initial point $x^1 \in \mathbb{R}^n$, $k=1$.\\
    
\noindent {\textbf{Step 1.}} If $\nabla f(x^k)=0$ stop and return $x^k$. Otherwise, go to Step 2.\\

\noindent {\textbf{Step 2.}} Compute
step $t_k$ such that
\begin{equation}\label{defyk}
y^k = x^k - t_k \nabla f(x^k)
\end{equation}
and $f(y^k)=f(x^k)$.

\noindent {\textbf{Step 3.}} If $\nabla f(y^k)$ and $\nabla f(x^k)$
are linearly independent compute
\begin{align}
\cos \theta_k &= \frac{\langle x^k-y^k,-\nabla f(y^k)\rangle}{\|x^k-y^k\|\|\nabla f(y^k)\|},\\
\sin \theta_k &= \sqrt{1-(\cos\theta_k)^2},\\
w^k & = -\nabla f(y^k) + \frac{\langle x^k-y^k,\nabla f(y^k)\rangle}{\|x^k-y^k\|^2}(x^k-y^k),\\
d^k&= \frac{w^k}{\|w^k\|}-\frac{\sin \theta_k}{2 \cos \theta_k}\frac{x^k-y^k}{\|x^k-y^k\|}.
\end{align}
$\hspace*{1.9cm}$Do a line search for $f$ on the semiline
$$
\left\{\frac{1}{2}\left(x^k + y^k\right)+v d^k : v \geq 0  \right\}
$$ 
$\hspace*{1.9cm}$to find $v_k \geq 0$ such that 
$$
f\left(\frac{1}{2}\left(x^k + y^k\right)+v_k d^k\right)<f(x^k)
$$
or until
$$
f\left(\frac{1}{2}\left(x^k + y^k\right)+v_k d^k\right)<f\left(\frac{1}{2}\left(x^k + y^k\right)\right)
$$
$\hspace*{1.9cm}$and set
\begin{equation}\label{defxcase1}
x^{k+1}=
\frac{1}{2}\left(x^k + y^k\right)+v_k d^k;
\end{equation}
$\hspace*{1.5cm}$else if $\nabla f(y^k)$ and $\nabla f(x^k)$
are linearly dependent compute
\begin{equation}\label{defxcase2}
x^{k+1} = \frac{1}{2}\left(x^k + y^k\right).
\end{equation}
$\hspace*{1.5cm}$end if\\
$\hspace*{1.5cm}$Do $k \leftarrow k+1$ and go to Step 1.\\
\noindent\rule[0.5ex]{1\columnwidth}{1pt}

\section{Properties of the algorithms}
\label{sec:geometry}

We now make a few comments about ME and study its properties. First, if $\nabla f(x^k)=0$ then 
$x^k$ is an optimal solution of
\eqref{defpb} and the algorithm stops.
Otherwise, the algorithm computes
an auxiliary point $y^k$ in the same level
set as $x^k$, i.e., satisfying 
\begin{equation}\label{levcurve}
f(y^k)=f(x^k).
\end{equation}
Lemma \ref{uniqueness} shows that there is
at most one such $y^k$ while Lemma 
\ref{existence} that there is at least one
such $y^k$, ensuring existence and uniqueness
of $y^k$ satisfying \eqref{levcurve}.

\begin{definition}[Line] A line $L$ is a set of form
$
L=\{tx+(1-t)y:t \in \mathbb{R}\}
$
where $x \neq y$ are two points in $\mathbb{R}^n$.
\end{definition}

\begin{lemma}\label{uniqueness} Let $f: \mathbb{R}^n \rightarrow \mathbb{R}$ be a strongly convex function
with constant of strong convexity $\mu$
for some norm $\|\cdot\|$. Given a line $L$, there
are at most two different points $x,y \in L$ where $f$ has the same value,
i.e., such that $f(x)=f(y)$.
\end{lemma}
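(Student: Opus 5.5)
Parametrize the line and argue by contradiction, using only the definition \eqref{defsc}. Write $L=\{p+s u: s\in\mathbb{R}\}$ with $u\neq 0$, and set $g(s)=f(p+su)$. Then $g:\mathbb{R}\to\mathbb{R}$ inherits strong convexity: applying \eqref{defsc} to the points $p+s_1u$ and $p+s_2u$ gives
$$
g(ts_1+(1-t)s_2)\le t g(s_1)+(1-t)g(s_2)-\frac{\mu t(1-t)}{2}\|u\|^2 (s_1-s_2)^2 ,
$$
with $\mu\|u\|^2>0$. In particular, $g$ is strictly convex. The statement then reduces to a one-dimensional claim: a strictly convex function on $\mathbb{R}$ cannot take the same value at three distinct points.

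Suppose, for contradiction, that there are three distinct points of $L$ with a common value $c$. Order the parameters as $s_1<s_2<s_3$, so that $g(s_1)=g(s_2)=g(s_3)=c$. Write $s_2=ts_1+(1-t)s_3$ with $t=(s_3-s_2)/(s_3-s_1)\in(0,1)$. Strong convexity then yields
$$
c=g(s_2)\le tc+(1-t)c-\frac{\mu t(1-t)}{2}\|u\|^2(s_3-s_1)^2<c,
$$
which is a contradiction. Hence at most two points of $L$ share any given value of $f$, which is exactly the claim of Lemma \ref{uniqueness}.

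There is no real obstacle here; the only care needed is in the interpretation. "At most two different points" should be read as: no level set of $f$ meets $L$ in more than two points. The middle-point argument above is precisely what establishes this. Note that the argument uses neither differentiability nor \eqref{scprop1}, only \eqref{defsc}, and it holds for any norm.
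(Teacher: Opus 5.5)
Your proof is correct and matches the paper's: the paper also writes the middle of three collinear points with equal $f$-values as a strict convex combination $tx+(1-t)z$ of the outer two, and applies \eqref{defsc} to force $\|x-z\|=0$, which is a contradiction. Your parametrization $g(s)=f(p+su)$ only repackages that same step in one variable.
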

\begin{proof} Assume by contradiction that there are three
different points $x, y, z$ in a line $L$ such that $f(x)=f(y)=f(z)$.
Denote by $\tilde f$ the common value of $f$ at these points
and without loss of generality assume that $y$ belongs
to the segment $[x,z]=\{tx+(1-t)z: 0 \leq t \leq 1\}$, i.e.,
$y=tx+(1-t)z$ for some $0<t<1$. Then
$$
\tilde f=f(y)=f(tx+(1-t)z) \leq 
tf(x)+(1-t)f(z)-\frac{\mu t(1-t)}{2}\|x-z\|^2
=\tilde f -\frac{\mu t(1-t)}{2}\|x-z\|^2
$$
which implies $x=z$ (since $\mu>0$ and
$1>t>0$) and yields the desired contradiction. 
\end{proof}

\begin{lemma}\label{existence}
Let $f: \mathbb{R}^n \rightarrow \mathbb{R}$ be a differentiable and 
strongly convex function with constant of strong convexity $\mu$ for some norm $\|\cdot\|$. Let $x \in \mathbb{R}^n$ such that
$\nabla f(x) \neq 0$. Then
there is one and only one $t>0$
such that
$f(x-t\nabla f(x))=f(x)$.
\end{lemma}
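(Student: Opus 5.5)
The plan is to reduce the statement to a one-dimensional problem along the ray in direction $-\nabla f(x)$. Define $\varphi:\mathbb{R}\to\mathbb{R}$ by $\varphi(t)=f(x-t\nabla f(x))-f(x)$. Then $\varphi$ is differentiable with $\varphi(0)=0$ and
$$\varphi'(t)=-\langle \nabla f(x-t\nabla f(x)),\nabla f(x)\rangle ,$$
so in particular $\varphi'(0)=-\|\nabla f(x)\|_2^2<0$. It will be convenient to note that $\varphi$ is itself strongly convex on $\mathbb{R}$. Indeed, restricting \eqref{defsc} to the line $\{x-t\nabla f(x): t\in\mathbb{R}\}$ gives strong convexity of $\varphi$ with constant $\mu\|\nabla f(x)\|^2>0$, where $\|\cdot\|$ is the given norm and $\nabla f(x)\neq 0$.

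\textbf{Existence.} Since $\varphi'(0)<0$, there is $t_0>0$ with $\varphi(t_0)<0$. Next I will show $\varphi(t)\to+\infty$ as $t\to+\infty$. Using the strong monotonicity \eqref{scprop1} along the line (equivalently, the standard inequality $\varphi(t)\ge \varphi(0)+\varphi'(0)t+\frac{\mu\|\nabla f(x)\|^2}{2}t^2$, obtained by integrating $\varphi'(s)-\varphi'(0)\ge \mu\|\nabla f(x)\|^2 s$ for $s\ge0$), we get
$$\varphi(t)\ge -\|\nabla f(x)\|_2^2\, t+\frac{\mu\|\nabla f(x)\|^2}{2}t^2 ,$$
which is positive for $t$ large. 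The inequality $\varphi'(s)-\varphi'(0)\ge \mu\|\nabla f(x)\|^2 s$ comes from applying \eqref{scprop1} to the points $x$ and $x-s\nabla f(x)$ and dividing by $s$. Hence there is $t_1>t_0$ with $\varphi(t_1)>0$. Since $\varphi$ is continuous, the intermediate value theorem on $[t_0,t_1]$ yields $t>0$ with $\varphi(t)=0$, i.e. $f(x-t\nabla f(x))=f(x)$.

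\textbf{Uniqueness.} Suppose $t\neq t'$ are both positive roots. Then the three distinct points $x$, $x-t\nabla f(x)$ and $x-t'\nabla f(x)$ lie on one line and share the value $f(x)$. This contradicts Lemma \ref{uniqueness}.

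The only mildly delicate step is the growth estimate, since the norm in \eqref{scprop1} is arbitrary and differs from the Euclidean inner product. This causes no difficulty, because only the positive scalar $\|\nabla f(x)\|$ appears as the curvature constant of $\varphi$. If one prefers to avoid integrating $\varphi'$, an alternative is to use \eqref{defsc} directly: take $y=x-t\nabla f(x)$ with large $t$ and compare with the point $x-t_0\nabla f(x)$ to force $\varphi(t)>0$ by convexity.
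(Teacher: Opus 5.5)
Your proof is correct and follows essentially the same route as the paper: a negative value of $\varphi$ near $t=0$ from $\varphi'(0)<0$, growth to $+\infty$ along the ray, the intermediate value theorem, and uniqueness via Lemma~\ref{uniqueness}. The only differences are in detail. You derive the coercivity explicitly from the quadratic lower bound $\varphi(t)\ge -\|\nabla f(x)\|_2^2t+\frac{\mu\|\nabla f(x)\|^2}{2}t^2$ obtained from \eqref{scprop1}, where the paper simply asserts it. Your uniqueness step, which uses $x$ itself as the third collinear point, covers all $t>0$, not just $t>t_0$ as the paper phrases it.
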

\begin{proof}
Let $g(t)=f(x-t\nabla f(x))$.
Since $g'(0)<0$, there is
$t_0>0$ such that
$g(t_0)<g(0)$. Since $f$
is strongly convex on $\mathbb{R}^n$, it is continuous
on $\mathbb{R}^n$ and coercive, i.e., $\lim_{t \rightarrow +\infty} g(t)=+\infty$, implying
that there is $t$ satisfying
$t_0<t<+\infty$ such that
$g(t)=g(0)$ or equivalently such that $f(x-t\nabla f(x))=f(x)$. By Lemma \ref{uniqueness}, there cannot be more than one such $t>t_0$, which achieves the proof of the lemma.
\end{proof}

We now show in the next section that the set of centers
of ellipses satisfying ME1, ME2, and ME3
is the semiline
$$
L_k:=\left\{\frac{1}{2}\Big(x^k+y^k\Big)+v d^k : v \geq 0\right\}
$$
where $d^k$ is computed in the algorithm.
Therefore, ME not only finds
a point which is the center of an ellipse
satisfying ME1, ME2, and ME3 but also
ensuring a sufficient decrease of the objective on the semiline $L_k$.
As a special case of a line search in $L_k$, we can set $x^{k+1}$ as the minimizer of $f$ in $L_k$.

\subsection{Computation of the set of centers of ellipses satisfying ME1, ME2, and ME3}

Since the center of ellipse $E_k$ is in
$\Pi_k$, we can reparametrize in 
$\mathbb{R}^2$
the computation of centers of ellipses
satisfying ME1, ME2, and ME3, see Figure \ref{fig:ellipses}.
To alleviate notation we write
$x$ instead of $x^k$, $y$ instead of
$y^k$ and assume that
$x$ has coordinates
$x=(\lambda,0)$ with $\lambda\geq 0$ and
$y=(0,0)$ is at the origin.
We also consider an arbitrary point
$z=(m,n)$ on the semiline 
$$
\{y-t\nabla f(y):t\geq 0\},
$$
i.e., $z \in \Pi_k$ is of form
$y-t\nabla f(y)$ for some $t \geq 0$
and $m,n \geq 0$. Observe that $z$ moves
along a semiline with inclination
given by angle $\theta$ such that
$$
\tan \theta=n/m,
$$
determined by $x, y$, and $\nabla f(y)$. The reader can refer to Figure
\ref{fig:image6} for a graphical illustration of the notation we have just introduced (in this figure, we have represented the angle $\theta$, $x$, $y$, $z$, $\nabla f(y)$,
$\nabla f(x)$, and vector $w:=w_k/\|w_k\|$ in ME).

\begin{figure}[h!]
    \centering
    \includegraphics[width=0.8\linewidth]{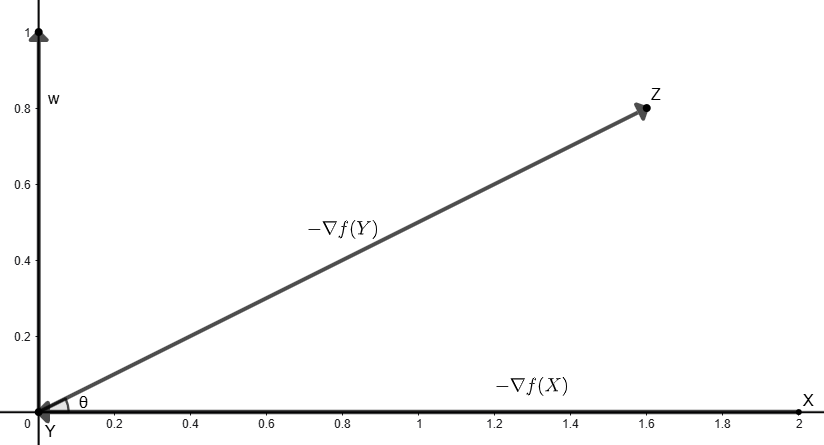}
    \caption{Setup for points $x$, $y$, and $z$.}
    \label{fig:image6}
\end{figure}
We are looking for an ellipse (and its center) of form
\[
\varphi(\alpha,\beta)
=\frac12\begin{bmatrix}\alpha&\beta\end{bmatrix}\!
\begin{bmatrix}1&b\\ b&a\end{bmatrix}
\!\begin{bmatrix}\alpha\\ \beta\end{bmatrix}
+\begin{bmatrix}c&d\end{bmatrix}\!\begin{bmatrix}\alpha\\ \beta\end{bmatrix}.
\]
This ellipse has to satisfy
\begin{enumerate}
    \item[(P1)] $\varphi(x)=\varphi(y)=\varphi(z)=0$;
    \item[(P2)] $y-x$ (which is a positive multiple of $-\nabla f(x)$) is a positive multiple of $-\nabla\varphi(x)$ and $z-y$ (which is a positive multiple of  $-\nabla f(y)$) is a positive multiple of $-\nabla\varphi(y)$.
\end{enumerate}
Property (P1) ensures that the ellipse
passes through $x$, $y$, and $z$ while (P2) ensures that
ME2 and ME3 are satisfied. From our computations, the ellipse will also be contained in $\Pi_k$ meaning that
ME1 is also satisfied.

\begin{figure}[h!]
    \centering
    
    % --- First figure ---
    \begin{subfigure}[t]{0.45\textwidth}
        \centering
        \includegraphics[scale=0.75]{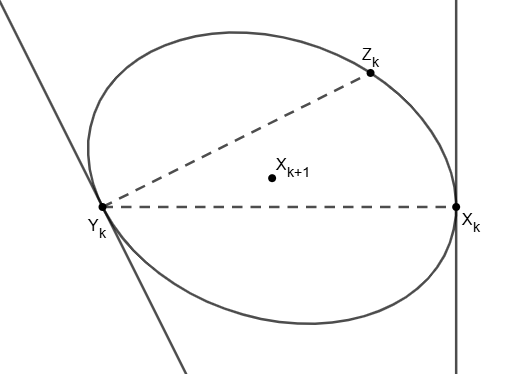}
        \caption{$z$ closer to $y$, yielding a wider ellipse}
        \label{fig:ellipse1}
    \end{subfigure}
    \hfill
    % --- Second figure ---
    \begin{subfigure}[t]{0.45\textwidth}
        \centering
        \includegraphics[scale=0.6]{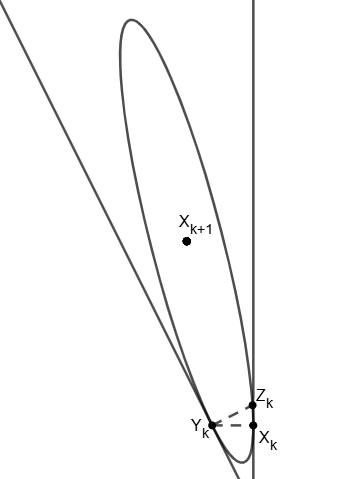}
        \caption{$z$ farther from $y$, yielding an elongated ellipse}
        \label{fig:ellipse2}
    \end{subfigure}

    \caption{Ellipses for different choices of $z$ along $\{y-\theta \nabla f(y) :  \theta \geq 0\}$}
    \label{fig:ellipses}
\end{figure}

\begin{proposition}[Values of $a$,$b$,$c$, and $d$] The values of $a,b,c,d$ satisfying (P1) and (P2) are $$a=\left(\frac{\lambda}{m} - 1\right)\left( \big(\cot \theta \big)^2 + 1 \right), \quad b=\frac{\tan \theta}{2}, \quad c=-\lambda/2, \quad d=-\tan \theta.$$
\end{proposition}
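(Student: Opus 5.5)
The plan is to impose the conditions (P1) and (P2) one at a time on the quadratic $\varphi$. Each condition is linear in the unknowns once the previous ones are fixed, so the four parameters can be read off in the order $c$, then $d$, then $b$, then $a$. Throughout we use $\nabla\varphi(\alpha,\beta)=(\alpha+b\beta+c,\; b\alpha+a\beta+d)$ and the relation $n=m\tan\theta$, which holds because $z=(m,n)$ lies on the semiline through $y$ with inclination $\theta$. We may assume $m>0$ and $\tan\theta>0$; this is the non-degenerate situation of Step 3, where $\nabla f(x^k)$ and $\nabla f(y^k)$ are linearly independent.

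\textbf{Step 1: conditions at $y$ and $x$ from (P1).} Since $y=(0,0)$, the condition $\varphi(y)=0$ holds automatically. At $x=(\lambda,0)$ we get $\varphi(x)=\tfrac12\lambda^2+c\lambda$. Since $\lambda>0$ (because $x\neq y$), the condition $\varphi(x)=0$ forces $c=-\lambda/2$.

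\textbf{Step 2: the gradient conditions (P2).} At $y$ we have $\nabla\varphi(y)=(c,d)$, and this must be a negative multiple of $z-y=(m,n)$. Hence $d/c=n/m=\tan\theta$, so $d=c\tan\theta$, and the multiplier is $-c/m=\lambda/(2m)>0$, so the sign requirement is met. At $x$ we have $\nabla\varphi(x)=(\lambda+c,\; b\lambda+d)=(\lambda/2,\; b\lambda+d)$, and this must be a positive multiple of $x-y=(\lambda,0)$. So the second component must vanish, giving $b=-d/\lambda$; the first component $\lambda/2$ is already positive.

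Combining the two relations gives $b=\tfrac12\tan\theta$, as stated. Here I expect the main point of care. Taken literally with the coordinates $x=(\lambda,0)$, the computation yields $d=-\tfrac{\lambda}{2}\tan\theta$. This agrees with the stated value $d=-\tan\theta$ only after the normalization $\lambda=2$, i.e.\ $\|x-y\|=2$. I would either make that scaling explicit, which is harmless since centers and ellipses scale together, or record the general form $d=c\tan\theta$ and $b=-d/\lambda$. The values of $b$ and $a$ do not depend on this choice.

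\textbf{Step 3: condition at $z$ from (P1).} Substitute $n=m\tan\theta$ into
\[
\varphi(z)=\tfrac12\bigl(m^2+2bmn+an^2\bigr)+cm+dn=0 .
\]
Using the values of $b,c,d$ from the previous steps, the terms containing $b$ and $d$ combine into a single multiple of $\tan^2\theta$. After multiplying by $2$, the equation becomes
\[
a\,m^2\tan^2\theta=(\lambda m-m^2)\bigl(1+\tan^2\theta\bigr).
\]
Dividing by $m^2\tan^2\theta$ gives $a=\bigl(\tfrac{\lambda}{m}-1\bigr)\bigl((\cot\theta)^2+1\bigr)$.

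The only real obstacle is bookkeeping: keeping the sign conventions in (P2) straight, and tracking the scaling by $\lambda$ in $d$ noted in Step 2. Finally, since $\varphi$ is defined on the plane parametrizing $\Pi_k$, the resulting curve lies in $\Pi_k$, so ME1 holds once the curve is a genuine ellipse. For that one can add a remark that the matrix of $\varphi$ is positive definite exactly when $a-b^2>0$; this is what produces the admissible range of $m$ and hence the semiline of centers.
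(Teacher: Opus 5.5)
Your proposal is correct and follows the paper's proof essentially step for step: $c$ from $\varphi(x)=0$, then the two normality conditions of (P2), then $a$ from $\varphi(z)=0$. The only difference is that you combine the normality conditions in the opposite order ($d=c\tan\theta$ at $y$, then $b=-d/\lambda$ at $x$, whereas the paper takes $d=-b\lambda$ at $x$, then $b=n/(2m)$ at $y$). Your remark about $d$ is also right: the paper's own argument only yields $d=-b\lambda=-\frac{\lambda}{2}\tan\theta$, so the stated $d=-\tan\theta$ holds only under the implicit normalization $\lambda=2$. That is the same normalization the paper later uses when it writes the line of centers as $u=1-\frac{\tan\theta}{2}v$.
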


\begin{proof}
From $\varphi(X)=0$,
\[
0=\tfrac12\lambda^2 + c\,\lambda \quad\Longrightarrow\quad c=-\frac{\lambda}{2}.
\] Normality at $X$ gives $\nabla\varphi(X)=(\lambda+c,\; b\lambda+d)$ and $Y-X=(-\lambda,0)$ is a positive multiple of $-\nabla\varphi(X)$. Hence $b\lambda+d=0$ and
\(
d=-b\lambda.
\)
Moreover, normality at $Y$ gives $\nabla\varphi(Y)=(c,d)=(-\lambda/2,\,-b\lambda)$ and $Z-Y=(m,n)$ must be a positive multiple of $-\nabla\varphi(Y)=(\lambda/2,\; b\lambda)$. Thus there exists $t>0$ with
\(
m=t\,\frac{\lambda}{2},n=t\,b\lambda
\quad\Longrightarrow\quad 
\,b=\frac{n}{2m}
\).
Now, using $c=-\lambda/2$ and $d=-b \lambda$,
\[
0=\varphi(m,n)=\tfrac12\big(m^2+2bmn+an^2\big) -(\lambda/2)m - b \lambda n.
\]
Multiplying by $2$ and rearranging:
\[
a\,n^2=\lambda m + 2b \lambda n - m^2 - 2bmn.
\]
Substituting $b=\frac{n}{2m}$ yields
\[
a\,n^2=\lambda m + \lambda\frac{n^2}{m} - m^2 - n^2,
\qquad\Longrightarrow\qquad
a=\frac{(\lambda-m)(m^2+n^2)}{m\,n^2}=\left(\frac{\lambda}{m} - 1\right)\left( \big(\cot \theta \big)^2 + 1 \right), 
\]
as desired.
\end{proof}

\begin{proposition}[Fitted conic is an ellipse]\label{prop:fitconic}

The fitted conic satisfying (P1) and (P2) is an ellipse if, and only if, $$0<m<\frac{4 \lambda \big(1+\big(\tan{\theta} \big)^2\big)}{\big( \big(\tan{\theta} \big) ^2+2 \big)^2 },$$

where $\tan \theta=\frac{n}{m}$.

\end{proposition}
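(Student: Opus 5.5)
The plan is to use the standard classification of real conics: the conic $\varphi(\alpha,\beta)=0$ is an ellipse exactly when its quadratic part is definite and the zero set contains more than one point. Here the quadratic part is given by the matrix
$$M=\begin{bmatrix}1&b\\ b&a\end{bmatrix}.$$
Its $(1,1)$ entry equals $1>0$, so $M$ is definite if and only if it is positive definite, i.e.\ if and only if $\det M=a-b^2>0$.

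The degenerate case is ruled out by (P1). The conic passes through $x$, $y$ and $z$, and $x=(\lambda,0)\neq y=(0,0)$ because $x^k\neq y^k$ (Lemma \ref{existence} gives $t_k>0$ and $\nabla f(x^k)\neq 0$). So when $M$ is positive definite, completing the square gives $\varphi(\alpha,\beta)=\frac12(p-p_0)^\top M(p-p_0)-\gamma$ with $\gamma>0$, which is a genuine ellipse. Conversely, if $a-b^2\le 0$ the conic is a hyperbola, a parabola or a pair of lines, not an ellipse. Hence the statement reduces to showing that $a-b^2>0$ if and only if the stated bound on $m$ holds.

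For this I will substitute the values from the previous proposition, $a=\left(\frac{\lambda}{m}-1\right)(\cot^2\theta+1)$ and $b=\frac{\tan\theta}{2}$, writing $T=\tan\theta=n/m$. Linear independence of the gradients means $z$ is not on the line through $x,y$, so $n\neq 0$ and $T\neq 0$. The condition becomes
$$\left(\frac{\lambda}{m}-1\right)\frac{1+T^2}{T^2}>\frac{T^2}{4},$$
and multiplying by $T^2/(1+T^2)>0$ gives
$$\frac{\lambda}{m}>1+\frac{T^4}{4(1+T^2)}=\frac{(T^2+2)^2}{4(1+T^2)}.$$

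The one point needing care is the sign of $m$, since it determines whether the inequality can be inverted. The formula for $b$ already requires $m\neq 0$. If $m<0$, then $\lambda/m\le 0$ (as $\lambda\ge 0$), so the last inequality fails: then $a<0$ and the conic is not an ellipse. So being an ellipse forces $m>0$. For $m>0$, multiplying by $m$ and dividing by the positive right-hand side is equivalent to
$$m<\frac{4\lambda(1+T^2)}{(T^2+2)^2},$$
which is exactly the claimed range. The main obstacle is making this sign and degeneracy bookkeeping airtight; the algebra itself is routine.
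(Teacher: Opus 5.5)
Your proposal is correct and follows the paper's proof: both reduce ellipticity to $a>b^2$ and then substitute the formulas for $a$ and $b$ to obtain the bound on $m$. Your version also justifies two points the paper only asserts: nondegeneracy, via the two distinct points $x\neq y$ on the conic, and the sign of $m$. (In fact $m>0$ is already forced by (P2), since $m=t\lambda/2$ with $t>0$, so your $m<0$ case is vacuous but harmless.)
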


\begin{proof} The conic $\{\varphi=0\}$ is an ellipse if and only if $a>b^2$. Using $b^2=\frac{n^2}{4m^2}$ and $a=\frac{(\lambda-m)(m^2+n^2)}{m\,n^2}$, $m$ must be positive, and:
\[
\frac{(\lambda-m)(m^2+n^2)}{m\,n^2} \;>\; \frac{n^2}{4m^2}
\quad\Longleftrightarrow\quad
4m(\lambda-m)(m^2+n^2) \;>\; n^4.
\]
With the parametrization $n=m\tan\theta $, this becomes
\[
4m(\lambda-m)m^2(1+\big(\tan\theta \big)^2) > \big(\tan\theta \big)^4 m^4
\;\;\Longleftrightarrow\;\;
m < \frac{4 \lambda \big(1+\big(\tan\theta \big)^2\big)}{\big(\big(\tan\theta \big)^2+2\big)^2}.
\]

\end{proof}

\begin{proposition}[Center of an admissible ellipse]
\label{prop:center_formula}
The center $(u,v)$ of the ellipse $\{\varphi=0\}$ is given by
\[
u=\frac{\lambda}{2} \frac{a-2b^2}{a-b^2},\qquad v=\frac{\lambda}{2}\frac{b}{a-b^2}.
\]
\end{proposition}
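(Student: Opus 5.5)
The center of a conic $\{\varphi=0\}$ with nondegenerate quadratic part is the unique critical point of $\varphi$. I will therefore compute the center by solving $\nabla\varphi(u,v)=0$. For the ellipse case this is legitimate: by Proposition \ref{prop:fitconic} we have $a>b^2$, so the matrix
\[
M=\begin{bmatrix}1&b\\ b&a\end{bmatrix}
\]
is positive definite and invertible. The gradient is $\nabla\varphi(\alpha,\beta)=M(\alpha,\beta)^T+(c,d)^T$. Writing $\varphi$ as $\frac12 (p-p_0)^T M (p-p_0)+\text{const}$ with $p_0=-M^{-1}(c,d)^T$ shows that the level set is symmetric about $p_0$. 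Hence $p_0$ is the center.

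The computation has three steps.

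\textbf{Step 1 (invert $M$).} We have $\det M=a-b^2$ and
\[
M^{-1}=\frac{1}{a-b^2}\begin{bmatrix}a&-b\\ -b&1\end{bmatrix}.
\]

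\textbf{Step 2 (substitute the coefficients).} Use the values $c=-\lambda/2$ and $d=-b\lambda$ obtained in the proof of the previous proposition (the relation $d=-b\lambda$ was derived there before substituting $b=\tan\theta/2$). This gives
\[
-(c,d)^T=\Bigl(\tfrac{\lambda}{2},\,b\lambda\Bigr)^T.
\]

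\textbf{Step 3 (multiply out).} The first coordinate is
\[
u=\frac{1}{a-b^2}\Bigl(a\frac{\lambda}{2}-b\cdot b\lambda\Bigr)=\frac{\lambda}{2}\,\frac{a-2b^2}{a-b^2}.
\]
The second coordinate is
\[
v=\frac{1}{a-b^2}\Bigl(-b\frac{\lambda}{2}+b\lambda\Bigr)=\frac{\lambda}{2}\,\frac{b}{a-b^2}.
\]
These are the claimed formulas.

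No step is a real obstacle. The only point needing care is to use $d=-b\lambda$ in symbolic form rather than the numeric value $d=-\tan\theta$ from the statement of the previous proposition. The two agree only after fixing $b=\tan\theta/2$ and normalizing $\lambda$, so working with $d=-b\lambda$ keeps the result in terms of $\lambda$ and $b$ as the statement requires. It is also worth noting, as a remark, that the invertibility of $M$ (equivalently $a\neq b^2$) is exactly what the ellipse condition from Proposition \ref{prop:fitconic} guarantees.
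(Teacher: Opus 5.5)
Your proposal is correct and takes essentially the same route as the paper, which also finds the center by solving $\nabla\varphi(u,v)=0$ as a $2\times 2$ linear system with right-hand side $-(c,d)=(\lambda/2,\, n\lambda/(2m))=(\lambda/2,\, b\lambda)$. Your explicit inverse, the completing-the-square argument that the critical point is the center, and your remark about using $d=-b\lambda$ rather than the normalized value $d=-\tan\theta$ only supply details the paper leaves implicit.
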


\begin{proof}
The center solves $\nabla\varphi(u,v)=0$, i.e.
\(
\begin{bmatrix}1&b\\ b&a\end{bmatrix}\!\begin{bmatrix}u\\ v\end{bmatrix}
=-\begin{bmatrix}c\\ d\end{bmatrix} = \begin{bmatrix}\lambda/2\\ n\lambda/2m\end{bmatrix}.
\)
Solving this $2\times2$ linear system yields the stated formulas.
\end{proof}

\begin{proposition}[Line of centers]
\label{prop:line_centers}
The set of centers of all admissible ellipses is contained in the line
\[
u \;=\; \lambda/2 - \Big(\frac{\tan \theta}{2}\Big)\,v .
\]
\end{proposition}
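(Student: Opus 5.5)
The plan is to use the explicit center formula of Proposition \ref{prop:center_formula} together with the values of $a,b,c,d$ from the preceding proposition. The key point is that $b=\frac{\tan\theta}{2}$ and $c=-\lambda/2$ do not depend on the free parameter $m$, which moves $z$ along the semiline. Only $a$ varies with $m$. So I will eliminate $a$ between the expressions for $u$ and $v$ and obtain a relation that involves only $\lambda$ and $\theta$.

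First, I rewrite $u$ as
$$
u=\frac{\lambda}{2}\,\frac{a-2b^2}{a-b^2}=\frac{\lambda}{2}\left(1-\frac{b^2}{a-b^2}\right)=\frac{\lambda}{2}-b\cdot\frac{\lambda}{2}\,\frac{b}{a-b^2}.
$$
The last factor is exactly $v$ from Proposition \ref{prop:center_formula}, so $u=\lambda/2-b\,v$. Substituting $b=\frac{\tan\theta}{2}$ gives $u=\lambda/2-\big(\frac{\tan\theta}{2}\big)v$.

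As an alternative check, I can work with the first row of the linear system $\nabla\varphi(u,v)=0$, namely $u+bv=-c=\lambda/2$. This yields the same line immediately and does not require inverting the matrix.

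The only point needing care is admissibility. I need $a-b^2\neq 0$, which holds because admissible ellipses satisfy $a>b^2$ by Proposition \ref{prop:fitconic}. So the formulas make sense for every admissible $m$, and every admissible center lies on the stated line. I do not expect a real obstacle: the statement only claims containment in the line, not equality with a semiline, so no surjectivity or sign analysis of $v$ is needed here.
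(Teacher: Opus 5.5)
Your proposal is correct and follows the paper's proof: both read off $u=\lambda/2-bv$ from the center formula of Proposition~\ref{prop:center_formula} and substitute $b=\tan\theta/2$. Equivalently, this relation is the first row $u+bv=-c=\lambda/2$ of $\nabla\varphi(u,v)=0$. Your explicit algebra and the remark that $a>b^2$ keeps the formulas well defined fill in details the paper leaves implicit.
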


\begin{proof}
From Proposition~\ref{prop:center_formula}, using $b=\frac{n}{2m}$ and $c=-\lambda/2$, we have
\(
u=\lambda/2-bv=\lambda/2 - \Big(\frac{\tan \theta}{2}\Big)\,v.
\)
\end{proof}

\begin{lemma}[Admissible segment of centers]
\label{lem:segment_centers}
The admissible centers form the semiline
\[
\mathcal{C} \;=\; \big\{\, (u,v)\in\R^2 \;:\; u=1-\tfrac{\tan \theta}{2}v,\;\; v\in(0,+\infty)\,\big\}.
\]
Moreover, the endpoints correspond to the degeneracy $a\downarrow b^2$ of $\{\varphi=0\}$.
\end{lemma}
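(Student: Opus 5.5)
The plan is to parametrize the admissible ellipses by the single free parameter $m$, the abscissa of the auxiliary point $z=(m,n)$ with $n=m\tan\theta$. Then I would compute the $v$-coordinate of the center explicitly as a function of $m$ and show that it sweeps $(0,+\infty)$ exactly once. Combined with Proposition~\ref{prop:line_centers}, this gives the semiline. Note that the statement writes $u=1-\frac{\tan\theta}{2}v$. By Proposition~\ref{prop:line_centers} the intercept is $\lambda/2$, so I read the statement with the normalization $\lambda=2$; equivalently, I would prove it with $\lambda/2$ in place of $1$.

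\textbf{Step 1 (range of the parameter).} By Proposition~\ref{prop:fitconic}, the conic is an ellipse if and only if $m\in(0,m_{\max})$, where
\[
m_{\max}=\frac{4\lambda(1+\tan^2\theta)}{(\tan^2\theta+2)^2}.
\]
Since $m>0$ and $z$ lies on $\{y-t\nabla f(y):t\geq 0\}$ with $m,n\ge 0$, we have $\tan\theta>0$. Indeed $\tan\theta\neq 0$, because $\nabla f(x^k)$ and $\nabla f(y^k)$ are linearly independent, and the first coordinate axis is the direction of $\nabla f(x^k)$. Hence $b=\tan\theta/2>0$ is a constant independent of $m$.

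\textbf{Step 2 (center as a function of $m$).} By Proposition~\ref{prop:center_formula},
\[
v(m)=\frac{\lambda}{2}\,\frac{b}{a(m)-b^2},\qquad a(m)=\Big(\frac{\lambda}{m}-1\Big)\big(\cot^2\theta+1\big).
\]
The function $a(m)$ is continuous and strictly decreasing on $(0,m_{\max})$, and $a(m)\to+\infty$ as $m\downarrow 0$. Also $a(m)-b^2>0$ on the whole interval, and $a(m)-b^2\downarrow 0$ as $m\uparrow m_{\max}$; this follows because the inequality in Proposition~\ref{prop:fitconic} becomes an equality at $m_{\max}$. Therefore $v$ is continuous and strictly increasing, with $v(m)\to 0^+$ as $m\downarrow 0$ and $v(m)\to+\infty$ as $m\uparrow m_{\max}$. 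By the intermediate value theorem, $v$ is a bijection from $(0,m_{\max})$ onto $(0,+\infty)$.

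\textbf{Step 3 (conclusion).} Proposition~\ref{prop:line_centers} gives $u=\lambda/2-bv$ for every admissible ellipse. So the set of centers is exactly $\{(\lambda/2-\tfrac{\tan\theta}{2}v,\,v):v>0\}$, which is the claimed semiline. The end $v\to+\infty$ corresponds to $m\uparrow m_{\max}$, i.e. to the degeneracy $a\downarrow b^2$, where the conic becomes a parabola. The other end, $v\to 0$, gives the limit point $\tfrac12(x+y)$ and corresponds to $m\downarrow 0$, where the ellipse degenerates by flattening onto the segment $[x,y]$.

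\textbf{Main obstacle.} The delicate point is the sign bookkeeping in Step 1. One must justify that $\tan\theta>0$ in the chosen coordinates, i.e. that $\theta\in(0,\pi/2)$; this is also needed for Proposition~\ref{prop:fitconic} to admit any $m>0$ at all. One must then check that $a-b^2$ tends to $0$ from above exactly at $m_{\max}$, which requires re-deriving the equality case of Proposition~\ref{prop:fitconic}. I would handle the sign by placing $\nabla f(y)$ via the descent property $\langle\nabla f(y),x-y\rangle<0$ from strong monotonicity \eqref{scprop1}. The rest is monotonicity of explicit one-variable functions.
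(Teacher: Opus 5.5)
Your proposal is correct and follows essentially the paper's own argument. You parametrize by $m$, use that $a$ is strictly decreasing in $m$ and that $v=\frac{\lambda}{2}\frac{b}{a-b^2}$ is decreasing in $a$, and read off $v\to 0$ as $a\to+\infty$ and $v\to+\infty$ as $m\uparrow m_{\max}$; your appeal to the equality case of Proposition~\ref{prop:fitconic} simply replaces the paper's explicit computation that the denominator of $v$ at $m=m_{\max}$ is a multiple of $\tan^2\theta-4b^2=0$.

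Your extra care tightens points the paper leaves implicit: making $b>0$ explicit, reading the intercept $1$ as $\lambda/2$, invoking continuity for surjectivity, and noting that only the $v\to+\infty$ end is the degeneracy $a\downarrow b^2$. The one small imprecision is the sign argument. The inequality $\langle\nabla f(y),x-y\rangle<0$ comes from $f(x)=f(y)$ together with $f(x)\ge f(y)+\langle\nabla f(y),x-y\rangle+\frac{\mu}{2}\|x-y\|^2$, not from \eqref{scprop1} applied to the pair $x,y$ alone.
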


\begin{proof}
Fix $b=\frac{n}{2m}$ and view $v$ as a function of the shape parameter $a$:
\[
v(a)=\frac{\lambda}{2}\frac{b}{a-b^2}.
\]
Then
\(
v'(a)=\frac{\lambda}{2}\frac{-b}{(a-b^2)^2}
<0,
\)
so the center moves monotonically along the line as $a$ increases.\\
% with the limits
% \[
% \lim_{a\downarrow b^2}u(a)= -\infty,\qquad \lim_{a\uparrow\infty}u(a)=1.
% \]
Furthermore, since 
\(
a = \left(\frac{\lambda}{m} - 1\right)\left( \big(\cot \theta \big)^2 + 1 \right),
\)
the parameter \(a\) is a strictly decreasing function of \(m\). Consequently, as \(m\) increases, the value of \(a\) decreases, which in turn implies that the coordinate \(v\) must increase accordingly.
Thus, there exist $v_-$ and $v_+$ such that $v \in (v_{-},v+)$, and, from Proposition ~\ref{prop:fitconic}, $v_-=\lim_{a \rightarrow \infty}v(a)=0$ and $v_+=\lim_{m \rightarrow M}      \frac{\lambda}{2}  \frac{bm}{\left(\lambda - m\right)\left( \big(\cot \theta \big)^2 + 1 \right)-b^2m},$
where $M=\frac{4 \lambda \big(1+\big(\tan{\theta} \big)^2\big)}{\big( \big(\tan{\theta} \big) ^2+2 \big)^2 }$. Since this function is continuous in $m$, it is enough to evaluate its value at $m=M$. Let \(t = \big(\tan\theta \big)^2\), so
\(
\lambda-M = \lambda-\frac{4\lambda (1+t)}{(t+2)^2}
= \frac{\lambda(t+2)^2-4\lambda (1+t)}{(t+2)^2}
= \frac{\lambda t^2}{(t+2)^2}.
\)\\
Therefore the denominator simplifies to
\(
(\lambda-M)\frac{1+t}{t}-b^2M
= \frac{\lambda(1+t)}{(t+2)^2}\,\Bigl(t-4b^2\Bigr).
\)\\
Now
\[
v_+ \;=\;
\frac{\lambda}{2}\frac{b\,\dfrac{4\lambda(1+t)}{(t+2)^2}}
     {\dfrac{1+t}{(t+2)^2}\,\lambda\bigl(t-4b^2\bigr)}
= \frac{2 \lambda b}{t-4b^2}=
\frac{2 \lambda b}{t-t}
= +\infty.
\]

\end{proof}

We now express the coordinates of the centers in $\mathbb{R}^n$.
Consider an orthonormal basis of the subspace $\Pi_k$ 
whose first basis vector is 
$$
\frac{x^k-y^k}{\|x^k-y^k\|}
$$
(observe that $x^k \neq y^k$, otherwise 
$\nabla f(x^k)=0$
which would imply that
$x^k$ is an optimal solution).
The second basis vector is chosen to be
$$
\frac{w^k}{\|w^k\|}
$$
where
$$
w^k=-\nabla f(y^k) - \|\nabla f(y^k)\|\frac{\cos \theta_k}{\|x^k-y^k\|}(x^k-y^k)=
-\nabla f(y^k) + \frac{\langle x^k-y^k,\nabla f(y^k)\rangle}{\|x^k-y^k\|^2}(x^k-y^k).
$$
It is immediate to check that
$w^k$ and the first basis vector
$(x^k-y^k)/\|x^k-y^k\|$ are orthogonal. Moreover, $$\langle w^k, -\nabla f(y^k) \rangle = \| \nabla f(y^k) \|^2 - \frac{\langle x^k-y^k, \nabla f(y^k) \rangle^2}{\| x^k - y^k \|^2} \geq 0,$$ by Cauchy-Schwarz. This implies that the angle between $w^k$ and $-\nabla f(y^k)$ is non-obtuse, which is consistent with Figure \ref{fig:image6}.
The origin is taken to be $y^k$.
With this definition,
by Proposition , a center of an ellipse
$E_k$ satisfying ME1, ME2, and ME2, is of form
$$
y^k + 
\Big(\frac{\lambda}{2}-\frac{\tan \theta}{2}v_k\Big)
\frac{x^k-y^k}{\|x^k-y^k\|}
+ v_k \frac{w^k}{\|w^k\|}=\frac{1}{2}\left(x^k + y^k\right)+v_k d^k
$$
where $v_k \geq 0$.
In the equality above,
we have used $\lambda=\|x^k-y^k\|$
and the definition of $d^k$ given  in ME.
% \subsection{Rules to select a center on the line}
% \label{subsec:rules}

% Let $\mathcal{L}_{\text{cent}}=\{c_0+\tau d:\tau\in(\tau_-,\tau_+)\}$ be an affine parametrization of the line in Proposition~\ref{prop:line_centers} restricted to the admissible segment of Lemma~\ref{lem:segment_centers}.
% We use the following selection rules.

% \begin{itemize}
% \item[\textnormal{(S1)}] \textbf{Exact minimizer on the line.} 
% Choose $x^{k+1}\in\arg\min\{\,f(c):\,c\in\mathcal{L}_{\text{cent}}\}$.
% This yields $f(x^{k+1})\le f(z^k)$ since $z^k$ belongs to the line of centers (Section~\ref{sec:convergence}).
% \end{itemize}

\begin{figure}
    \centering
    \includegraphics[width=0.75\linewidth]{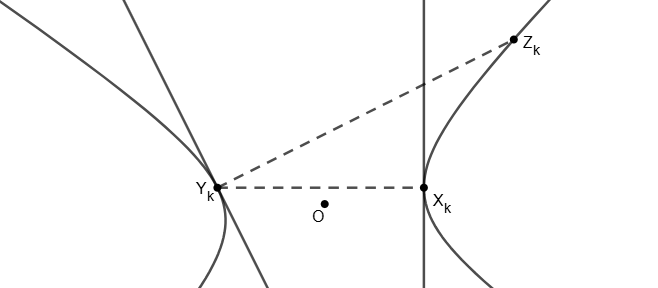}
    \caption{Fitted conic is a hyperbola when $v \not \in (v_-,v_+)$}
    \label{fig:placeholder}
\end{figure}

\section{Convergence}\label{sec:conv}

\subsection{A first convergence proof}

For any integer $k \geq 0 $, let $x^k$ be the $k$-th iterate and $t_k>0$ the unique real number such that $f(x^k-t_k \nabla f(x^k))=f(x^k)$. Let $z^k := x^k-\frac{t_k}{2} \nabla f(x^k)$. From the previous section, we know that, in the $k$-th iterate, the semiline of the centers of the ellipses passes through $z^k$. Since $x^{k+1}$ is set to be the minimizer of $f$ across the semiline of centers, we obtain that $f(x^{k+1}) \leq f(z^k) < f(x^k)$. 

\begin{lemma}\label{lemcompactdom}
Let $f:\mathbb{R}^n \rightarrow \mathbb{R}$
be a strongly convex function. Then the level set
\begin{equation}\label{defL}
\mathcal{L}=\{ x \in \mathbb{R}^n|f(x) \leq f(x^0) \}
\end{equation}
for $x^0 \in \mathbb{R}^n$
is compact.
\end{lemma}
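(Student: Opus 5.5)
To prove Lemma \ref{lemcompactdom}, I show that $\mathcal{L}$ is closed and bounded, which gives compactness in $\mathbb{R}^n$. Here $\|\cdot\|$ is an arbitrary norm, but all norms on $\mathbb{R}^n$ are equivalent, so boundedness in $\|\cdot\|$ gives boundedness in the Euclidean norm.

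\textbf{Closedness.} A convex function $f:\mathbb{R}^n\to\mathbb{R}$ that is finite everywhere is continuous. Hence $\mathcal{L}=f^{-1}((-\infty,f(x^0)])$ is the preimage of a closed set under a continuous map, and so is closed. (In the setting of the paper $f$ is differentiable anyway, so continuity is immediate.)

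\textbf{Boundedness.} I would not use differentiability here; the definition \eqref{defsc} alone suffices. Fix $x\in\mathcal{L}$ and apply \eqref{defsc} with $t=1/2$ to the points $x$ and $x^0$:
\begin{equation*}
f\Big(\tfrac{x+x^0}{2}\Big)\le \tfrac12 f(x)+\tfrac12 f(x^0)-\tfrac{\mu}{8}\|x-x^0\|^2\le f(x^0)-\tfrac{\mu}{8}\|x-x^0\|^2 .
\end{equation*}
The point $(x+x^0)/2$ lies in the $\|\cdot\|$-ball of radius $R:=\|x-x^0\|/2$ around $x^0$. I then need a lower bound on $f$ over such balls that grows at most linearly in $R$. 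Since $f$ is convex and finite on $\mathbb{R}^n$, it has a subgradient $g$ at $x^0$ (one can take $g=\nabla f(x^0)$ when $f$ is differentiable). Then
\begin{equation*}
f(y)\ge f(x^0)+\langle g,y-x^0\rangle\ge f(x^0)-\|g\|_*\|y-x^0\|,
\end{equation*}
where $\|\cdot\|_*$ is the dual norm. With $y=(x+x^0)/2$ this gives $f(y)\ge f(x^0)-\tfrac12\|g\|_*\|x-x^0\|$. Combining with the previous display,
\begin{equation*}
\tfrac{\mu}{8}\|x-x^0\|^2\le \tfrac12\|g\|_*\|x-x^0\|,
\end{equation*}
so $\|x-x^0\|\le 4\|g\|_*/\mu$. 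This bound does not depend on $x$, so $\mathcal{L}$ is bounded.

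\textbf{Main obstacle.} The only delicate step is the linear lower bound on $f$. If one prefers to avoid subgradients, the alternative is to use \eqref{scprop1} together with differentiability: writing $f(x)-f(x^0)=\int_0^1\langle\nabla f(x^0+s(x-x^0)),x-x^0\rangle ds$ and applying strong monotonicity gives $f(x)\ge f(x^0)+\langle\nabla f(x^0),x-x^0\rangle+\frac{\mu}{2}\|x-x^0\|^2$. The same bound $\|x-x^0\|\le 2\|\nabla f(x^0)\|_*/\mu$ then follows directly.
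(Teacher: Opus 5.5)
Your proof is correct. The closedness step is the same as the paper's: a finite convex function on $\mathbb{R}^n$ is continuous. For boundedness you take a different route.

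The paper argues by contradiction. It takes a sequence in $\mathcal{L}$ with $\|x^k\|\to\infty$ and invokes the quadratic growth bound $f(x)\ge f(x^*)+\frac{\mu}{2}\|x-x^*\|^2$ around the minimizer $x^*$. You instead anchor everything at $x^0$: you combine the midpoint case of \eqref{defsc} with the linear lower bound from a subgradient at $x^0$.

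The paper's version is shorter, but it takes for granted that $x^*$ exists and that the growth bound holds there. Existence of $x^*$ is normally deduced precisely from boundedness of level sets, so relying on it here is close to circular if one wants a self-contained argument.

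Your argument needs neither fact and works verbatim for nondifferentiable $f$. It also gives an explicit radius, $\|x-x^0\|\le 4\|g\|_*/\mu$, or $2\|\nabla f(x^0)\|_*/\mu$ with your integral variant. Existence of $x^*$ then follows from Weierstrass's theorem on the compact set $\mathcal{L}$.
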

\begin{proof}
Assume that 
$\mathcal{L}$ is not bounded. Then we can find
a sequence 
$(x^k)$ such that
$\lim_{k \rightarrow +\infty} \|x^k\| = +\infty$ and $f(x^k) \leq f(x^0)$. Moreover, by strong convexity of $f$, we have
$$
f(x^k) \geq f(x^*) + \frac{\mu}{2} \| x^k-x^*\|^2,
$$
which implies
$\lim_{k \rightarrow +\infty} f(x^k)=+\infty$. This is in contradiction with $f(x^k) \leq f(x^0)$.
Therefore $\mathcal{L}$ is bounded. 
Next, we argue that $\mathcal{L}$ is closed. 
Take a sequence of points
$(x^k)$ in $\mathcal{L}$
converging to $\bar x$.
Since $f$ is convex with domain $\mathbb{R}^n$ it is continuous on $\mathbb{R}^n$ which gives
$\lim_{k \rightarrow +\infty} f(x^k)=f(\bar x) \leq f(x^0)$ and $\bar x \in \mathcal{L}$.
This implies that 
$\mathcal{L}$ is closed.
It follows that $\mathcal{L}$ is compact.
\end{proof}

\begin{proposition}\label{prop:main}
    For all $\eta>0$, there exists $\tau>0$ such that for any $x$ satisfying $f(x) \leq f(x^0)$ and $||\nabla f(x) || \geq \eta$, we have 
    $$
    f(x-t\nabla f(x)) \leq f(x) - \frac{\eta^2}{2}t 
    $$ for any $t \in [0, \tau]$.
\end{proposition}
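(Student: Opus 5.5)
The plan is to combine the compactness of the level set from Lemma~\ref{lemcompactdom} with uniform continuity of $\nabla f$ on a slightly enlarged compact set. Then I will integrate the directional derivative along the segment $[x,x-t\nabla f(x)]$. Throughout, $\|\cdot\|$ for gradients and steps denotes the Euclidean norm; the norm in which $f$ is strongly convex plays no role beyond giving compactness of $\mathcal{L}$, since all norms on $\mathbb{R}^n$ are equivalent.

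\textbf{Step 1 (continuity of the gradient).} First I recall that a differentiable convex function on $\mathbb{R}^n$ is automatically continuously differentiable (Rockafellar, Theorem 25.5). I expect this to be the main point needing care, since the statement only assumes differentiability. If one prefers to avoid the citation, the alternative is a direct argument: take $x^j\to x$ and use boundedness of subgradients on bounded sets together with closedness of the subdifferential graph. Any limit point of $\nabla f(x^j)$ is then a subgradient at $x$, and this subgradient must equal $\nabla f(x)$ by differentiability.

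\textbf{Step 2 (compact neighbourhood and uniform constants).} By Lemma~\ref{lemcompactdom}, $\mathcal{L}$ is compact. Hence $G:=\max_{z\in\mathcal{L}}\|\nabla f(z)\|<\infty$. If $G<\eta$, the statement is vacuous, so assume $G\ge\eta>0$. Let $K:=\{z:\mathrm{dist}(z,\mathcal{L})\le 1\}$, which is also compact. By Step 1, $\nabla f$ is uniformly continuous on $K$. Choose $\delta\in(0,1]$ such that $\|\nabla f(u)-\nabla f(w)\|\le \eta/2$ whenever $u,w\in K$ and $\|u-w\|\le\delta$. Set $\tau:=\delta/G$.

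\textbf{Step 3 (descent estimate).} Fix $x\in\mathcal{L}$ with $\|\nabla f(x)\|\ge\eta$, write $g=\nabla f(x)$, and take $t\in[0,\tau]$. For $s\in[0,t]$ the point $x-sg$ lies within distance $s\|g\|\le\tau G=\delta\le1$ of $x$. So it belongs to $K$, and $\|\nabla f(x-sg)-g\|\le\eta/2$. The function $s\mapsto f(x-sg)$ is $C^1$, so
\[
f(x-tg)-f(x)=-\int_0^t\langle\nabla f(x-sg),g\rangle\,ds\le -t\|g\|^2+t\frac{\eta}{2}\|g\|.
\]
Finally, since $\|g\|\ge\eta$ we have $\frac{\eta}{2}\|g\|\le\frac12\|g\|^2$. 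The right-hand side is therefore at most $-\frac{t}{2}\|g\|^2\le-\frac{\eta^2}{2}t$, which is the claim.
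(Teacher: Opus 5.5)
Your proof is correct and follows essentially the same route as the paper: compactness of the level set $\mathcal{L}$ gives a uniform continuity estimate, and integrating the derivative of $s\mapsto f(x-s\nabla f(x))$ over $[0,t]$ yields the descent bound. The differences are minor. The paper applies uniform continuity directly to $(x,t)\mapsto-\langle\nabla f(x-t\nabla f(x)),\nabla f(x)\rangle$ on the compact set $\mathcal{L}\times[0,1]$, which makes your enlarged set $K$ and gradient bound $G$ unnecessary, while your Step~1 explicitly justifies the continuity of $\nabla f$ that the paper uses without comment.
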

\begin{proof} Let
\begin{equation}\label{defLbis}
\mathcal{L}=\{ x \in \mathbb{R}^n|f(x) \leq f(x^0) \}
\end{equation}
and define $\phi: \mathcal{L} \times [0,1] \rightarrow \mathbb{R}$ by $\phi(x,t)=f(x-t \nabla f(x))$. 

We have that $\phi$ is differentiable with 
$$\frac{\partial \phi}{\partial t}(x,t)= - \langle \nabla f(x-t \nabla f(x)), \nabla f(x) \rangle.
$$
In particular, $\frac{\partial \phi}{\partial t}(x,0)=-||\nabla f(x)||^2$.

By Lemma \ref{lemcompactdom}, the domain of $\phi$ is compact,
Together with the fact that
$\frac{\partial \phi}{\partial t}$ is continuous, this implies that  $\frac{\partial \phi}{\partial t}$ is uniformly continuous on its domain. Hence, for every $\eta>0$, there exists $\tau>0$ such that if $|u-v|<\tau$ the inequality 
$$\left|\frac{\partial \phi}{\partial t}(x,u)-\frac{\partial \phi}{\partial t}(x,v)\right| \leq \frac{\eta^2}{2}$$
holds. We then have $$\frac{\partial \phi}{\partial t}(x,s)+||\nabla f(x)||^2 \leq \left|\frac{\partial \phi}{\partial t}(x,s)+|| \nabla f(x)||^2\right|=\left|\frac{\partial \phi}{\partial t}(x,s)-\frac{\partial \phi}{\partial t}(x,0)\right| \leq \frac{\eta^2}{2}$$
for any $s \in [0, \tau]$. It follows  that $$\frac{\partial \phi}{\partial t}(x,s) \leq \frac{\eta^2}{2}-||\nabla f(x)||^2, \quad \forall s \in [0,\tau].$$

Thus, if $||\nabla f(x)||\geq\eta$, we can find a $\tau>0$ such that the above inequality holds for all $s \in [0,\tau]$ and $x \in \mathcal{L}$, from the uniform continuity of $\frac{\partial \phi}{\partial t}$. But then $\frac{\eta^2}{2}-||\nabla f(x)||^2 \leq \frac{\eta^2}{2}-\eta^2=-\frac{\eta^2}{2}$, meaning that $$\frac{\partial \phi}{\partial t}(x,s) \leq -\frac{\eta^2}{2}, \;\; \forall s \in [0,\tau], \forall  x \in \mathcal{L}.
$$
Therefore, for any $t \leq \tau$, $$\phi(x,t)-\phi(x,0)=\int_0^t \frac{\partial \phi}{\partial t}(x,s)ds \leq \int_0^t -\frac{\eta^2}{2}=-t\frac{\eta^2}{2},$$
which implies 
$$f(x-t \nabla f(x)) \leq f(x) - \frac{\eta^2}{2} t
$$ for all $t \in [0,\tau]$ and $x \in \mathcal{L}$, as desired.
\end{proof}

\begin{theorem}
Let $(x^k)$ be the sequence generated
by ME. Then 
$\lim_{k \rightarrow +\infty} x^k=x^*$.
\end{theorem}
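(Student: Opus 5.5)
The plan is to show that $\nabla f(x^k)\to 0$ and then to use strong monotonicity \eqref{scprop1} to pass from small gradients to convergence of the iterates. First, monotone descent: as noted at the start of this section, $f(x^{k+1})\le f(z^k)<f(x^k)$, where $z^k=x^k-\frac{t_k}{2}\nabla f(x^k)$. This also covers the linearly dependent case, since there $x^{k+1}=z^k$. Hence all iterates lie in the level set $\mathcal{L}$ of Lemma \ref{lemcompactdom}, taking $x^0=x^1$, which is compact. The sequence $(f(x^k))$ is decreasing and bounded below by $f(x^*)$, so it converges and $f(x^k)-f(x^{k+1})\to 0$. If the method stops finitely, the last iterate is $x^*$ and there is nothing to prove.

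The key step is to argue by contradiction that $\|\nabla f(x^k)\|\to 0$. Suppose not. Then there are $\eta>0$ and infinitely many $k$ with $\|\nabla f(x^k)\|\ge\eta$. For these $k$, Proposition \ref{prop:main} provides $\tau>0$ independent of $k$. The main obstacle is a uniform lower bound on the step $t_k$. I would obtain it by strong convexity. By the descent lemma of Proposition \ref{prop:main}, $f(x-t\nabla f(x))<f(x)$ for $t\in(0,\tau]$. By uniqueness (Lemmas \ref{uniqueness} and \ref{existence}), the function $g(t)=f(x^k-t\nabla f(x^k))$ is strictly convex with $g(0)=g(t_k)$, so $g<g(0)$ exactly on $(0,t_k)$. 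Therefore $t_k\ge\tau$. There are then two cases. If $t_k/2\le\tau$, Proposition \ref{prop:main} directly gives
$$f(z^k)\le f(x^k)-\frac{\eta^2}{4}t_k\le f(x^k)-\frac{\eta^2}{4}\tau.$$
If $t_k/2>\tau$, convexity of $g$ on $[0,t_k/2]$ with $\tau\le t_k/2$, together with $g(t_k/2)\le g(\tau)$, gives $f(z^k)\le f(x^k-\tau\nabla f(x^k))\le f(x^k)-\frac{\eta^2}{2}\tau$. The inequality $g(t_k/2)\le g(\tau)$ holds because $g$ is convex and symmetric-ish: more precisely, the minimizer of $g$ over $[0,t_k]$ need not be at $t_k/2$. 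So I would instead use convexity and $g(0)=g(t_k)$ to write
$$g(t_k/2)\le \tfrac12 g(0)+\tfrac12 g(t_k)-\tfrac{\mu}{8}t_k^2\|\nabla f(x^k)\|^2 = g(0)-\tfrac{\mu}{8}t_k^2\|\nabla f(x^k)\|^2\le g(0)-\tfrac{\mu}{8}\tau^2\eta^2.$$
This uses the strong convexity definition \eqref{defsc} with $t=1/2$, assuming the Euclidean norm (otherwise use norm equivalence). This cleaner bound actually dispenses with the case split entirely, and I would use it in place of the two cases.

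In every such iteration we therefore obtain $f(x^{k+1})\le f(x^k)-c$ with $c=\frac{\mu}{8}\tau^2\eta^2>0$. This happens for infinitely many $k$ while $f$ is monotone, which contradicts the boundedness of $f$ from below. Hence $\nabla f(x^k)\to 0$. Finally, applying \eqref{scprop1} with $y=x^k$ and $x=x^*$, where $\nabla f(x^*)=0$, together with Cauchy--Schwarz, gives $\mu\|x^k-x^*\|\le\|\nabla f(x^k)\|\to 0$, so $x^k\to x^*$.
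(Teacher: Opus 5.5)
Your proof is correct and follows essentially the same route as the paper. Both use the strong-convexity midpoint bound $f(z^k)\le f(x^k)-\frac{\mu}{8}t_k^2\|\nabla f(x^k)\|^2$, the uniform lower bound $t_k\ge\tau$ from Proposition~\ref{prop:main} whenever $\|\nabla f(x^k)\|\ge\eta$, and strong monotonicity to pass from $\nabla f(x^k)\to 0$ to $x^k\to x^*$. The only difference is cosmetic: you contradict lower boundedness directly with the uniform decrease $\frac{\mu}{8}\tau^2\eta^2$ on infinitely many iterations, while the paper first deduces $t_k^2\|\nabla f(x^k)\|^2\to 0$ from summability of $f(x^k)-f(z^k)$. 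In the final write-up, delete the abandoned case split entirely, since its claim $g(t_k/2)\le g(\tau)$ can fail when the minimizer of $g$ lies left of $\tau$.
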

\begin{proof} First observe that by strong-convexity of $f$, we have
\begin{align}
f(z^k) & =  f\Big(x^k-\frac{t_k}{2}\nabla f(x^k)\Big)\\
& =  f\Big(\frac{1}{2}\Big(x^k- t_k\nabla f(x^k)\Big) + \frac{1}{2}x^k\Big)\\
& \stackrel{\eqref{defsc}}{\leq} \frac{1}{2}f\Big(x^k- t_k\nabla f(x^k)\Big) + \frac{1}{2}f\Big(x^k\Big) -\frac{\mu t_k^2}{8} || \nabla f(x^k)||^2 \\
&= f(x^k)-\frac{\mu t_k^2}{8} || \nabla f(x^k)||^2. \label{finalzinfx}
\end{align}
Next notice that by definition of 
$x^{k+1}$ we have
$f(x^{k+1}) \leq f(z^k)$
while \eqref{finalzinfx} shows in particular that
$f(z^k) \le f(x^k)$ and we therefore have
\begin{equation}
f(x^{k+1}) \leq f(z^k) \leq f(x^k).
\end{equation}
Since the sequence $(f(x^k))$ is 
nonincreasing and lower bounded by
$f(x^*)$ it converges to some
$\ell \in [f(x^*),f(x^0)]$, which, by continuity of $f$, can be written
under the form $\ell=f(\bar x)$.

Let $S_n=\sum_{k=0}^n f(x^k)-f(z^k)$.
Since $f(x^k)-f(z^k) \geq 0$, sequence
$(S_n)$ is nondecreasing and it is upper bounded:
$$
S_n\leq \sum_{k=0}^n f(x^k)-f(x^{k+1})=
f(x^0)-f(x^{n+1})\leq f(x^0)-f(x^*).
$$
It follows that $S_n$ converges
from which we deduce
\begin{equation}\label{convzxz}
\lim_{k \rightarrow +\infty} f(x^k)-f(z^k)=0.
\end{equation}
Combining \eqref{finalzinfx} and \eqref{convzxz} we obtain
\begin{equation}
0 \leq \lim_{k \rightarrow \infty} \frac{\mu t_k^2}{8}||\nabla f(x^k)||^2 \leq \lim_{k \rightarrow \infty} (f(x^k)-f(z^k))=0
\end{equation}
which gives
\begin{equation}\label{tngradg0}
 \lim_{k \rightarrow \infty} \frac{\mu t_k^2}{8}||\nabla f(x^k)||^2 = 0.
 \end{equation}

Assume by contradiction that
$\lim_{n \rightarrow \infty}||\nabla f(x^n)|| \neq 0$. Then there exists a subsequence $\{k_n\}_{n=1}^{\infty}$ and a positive constant $\eta$ such that $||\nabla f(x^{k_n})|| \geq \eta$ for all $n \in \mathbb{N}$. Since $\{x^{k_n}\}_{n=1}^{\infty} \subset \mathcal{L}$ where $\mathcal{L}$ is given by \eqref{defLbis}, from Proposition ~\ref{prop:main} there exists $\tau>0$ such that $$f(x^{k_n}-t \nabla f(x^{k_n})) \leq f(x^{k_n})- \frac{\eta^2}{2}t$$
for all $t \in [0,\tau]$ and $n \in \mathbb{N}$.

Thus, if $t_{k_n}<\tau$ for some $n$, we would have $$f(x^{k_n})=f(x^{k_n}-t_{k_n} \nabla f(x^{k_n})) \leq f(x^{k_n})-\frac{\eta^2}{2} t_{k_n}$$ implying that  $\eta^2 t_{k_n} \leq0$, a contradiction (since $\eta>0$, $t_{k_n}>0$). Therefore, $t_{k_n} \geq \tau$, which implies that $ t_{k_n}^2 || \nabla f(x^{k_n})||^2 \geq \tau^2 \eta^2 >0$, so $\lim_{n \rightarrow \infty} t_n^2 || \nabla f(x^n)||^2 \geq \tau^2 \eta^2>0$, a contradiction with \eqref{tngradg0}. 

Therefore, 
\begin{equation}\label{limgradz}
\lim_{n \rightarrow \infty} \| \nabla f(x^n)\|=0.
\end{equation}
It follows, using the strong convexity of $f$ and Cauchy-Schwartz inequality, that
$$
\mu \|x^k-x^*\|^2 
\stackrel{\eqref{scprop1}}{\leq} \langle 
\nabla f(x^k)-\nabla f(x^*),x^k-x^* 
\rangle = \langle 
\nabla f(x^k),x^k-x^* 
\rangle  \leq \|\nabla f(x^k)\| \|x^k-x^*\|
$$
from which we deduce
\begin{equation} 
\|x^k-x^*\| \leq (1/\mu) \|\nabla f(x^k)\|
\end{equation}
which, combined with \eqref{limgradz}, gives
$\lim_{k \rightarrow +\infty} x^k=x^*;$
\end{proof}

\subsection{A second convergence proof showing
linear convergence with rate $\eta=1-\frac{\mu^2}{L^2}$}

We start with Proposition 
\ref{prop1mecl} showing an important property of
ME iterates.

\begin{proposition}\label{prop1mecl} Let $(x_k)$ be the sequence generated by ME. Assume 
$f$ is $\mu$-stongly convex and $L$-smooth. Then for every $k \geq 0$,
\begin{equation}\label{ineqpropme}
f(x_{k+1})\leq f(x_k)-\frac{\mu}{2L^2}\|\nabla f(x_k)\|^2.
\end{equation}
\end{proposition}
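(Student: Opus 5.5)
The plan is to chain the inequality $f(x_{k+1})\le f(z^k)$ with the strong-convexity bound \eqref{finalzinfx} from the previous subsection. This reduces the claim to a lower bound on $t_k$ in terms of $\mu$ and $L$. Recall that $z^k=x^k-\frac{t_k}{2}\nabla f(x^k)$ lies on the semiline of centers, and that $x^{k+1}$ is the minimizer of $f$ there (or, for the line-search variant, a point with value at most $f(z^k)$). Hence
\[
f(x_{k+1})\le f(z^k)\le f(x_k)-\frac{\mu t_k^2}{8}\|\nabla f(x_k)\|^2 .
\]
It therefore suffices to show $t_k\ge 2/L$, since then $\frac{\mu t_k^2}{8}\ge \frac{\mu}{2L^2}$, which is exactly \eqref{ineqpropme}.

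To bound $t_k$ from below I would use $L$-smoothness through the descent lemma,
\[
f(x-t\nabla f(x))\le f(x)-t\|\nabla f(x)\|^2+\frac{Lt^2}{2}\|\nabla f(x)\|^2
= f(x)-t\Big(1-\frac{Lt}{2}\Big)\|\nabla f(x)\|^2 .
\]
For every $t\in(0,2/L)$ the right-hand side is strictly below $f(x)$ whenever $\nabla f(x)\neq 0$. So $g(t)=f(x_k-t\nabla f(x_k))$ satisfies $g(t)<g(0)$ on $(0,2/L)$. By Lemma \ref{existence}, $t_k$ is the unique positive root of $g(t)=g(0)$, so $t_k\notin(0,2/L)$, and thus $t_k\ge 2/L$. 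Substituting gives $\frac{\mu t_k^2}{8}\|\nabla f(x_k)\|^2\ge \frac{\mu}{2L^2}\|\nabla f(x_k)\|^2$, which completes the chain.

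Two details need care. First, if $\nabla f(x_k)=0$ the algorithm stops and the inequality is trivial. Second, in the linearly dependent case $x^{k+1}=z^k$ directly, so the first inequality holds with equality. The main obstacle is justifying $f(x_{k+1})\le f(z^k)$ for the line-search variant, because its stopping rule only guarantees decrease relative to $f(x_k)$ or to $f(z^k)$. I would state the proposition for the exact-minimization version, or for the variant requiring decrease below $f(z^k)$, as the convergence section assumes. Beyond that, the only inputs are the descent lemma and Lemma \ref{existence}, so the argument is short.
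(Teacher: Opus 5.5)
Your proof is correct and follows essentially the same route as the paper. It chains $f(x_{k+1})\le f\bigl(\frac{x_k+y_k}{2}\bigr)\le f(x_k)-\frac{\mu}{8}t_k^2\|\nabla f(x_k)\|^2$ and gets $t_k\ge 2/L$ from the descent lemma; the paper simply plugs $t_k$ into the descent lemma using $f(y_k)=f(x_k)$, which is your contrapositive argument in one line, and the uniqueness part of Lemma \ref{existence} is not needed. Your caveat about the line-search variant is well taken: the paper just asserts $f(x_{k+1})\le f\bigl(\frac{x_k+y_k}{2}\bigr)$ ``by definition of ME iterations,'' which is guaranteed only for the exact-minimization version or for a line search that enforces decrease below the midpoint value.
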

\begin{proof} Fix $k \geq 0$. 
Both in the case when 
$\nabla f(x_k)$ and 
$\nabla f(y_k)$ are
linearly independent
and in the case when 
$\nabla f(x_k)$ and 
$\nabla f(y_k)$ are
linearly dependent, by definition
of ME iterations, we have 
\begin{equation}\label{fxlxm}
f(x_{k+1}) \leq f\left( \frac{x_k+y_k}{2} \right).
\end{equation}
We now show that
\begin{equation}\label{bdtkl}
t_k \ge \frac{2}{L}.
\end{equation}
Since $\nabla f$ is $L$-Lipschitz, the descent lemma gives
\[
f(x_k-t_k \nabla f(x_k))
\le
f(x_k)-t_k \|\nabla f(x_k)\|^2+\frac{L}{2}t_k^2\|\nabla f(x_k)\|^2 .
\]
Using
$f(x_k-t_k \nabla f(x_k))=f(x_k)$
and the condition
$t_k>0$, we deduce \eqref{bdtkl}.

Next, strong convexity implies
\[
f\left(
\frac{x_k+y_k}{2}
\right)
\le
\frac12 f(x_k)+\frac12 f(y_k)
-\frac{\mu}{8}\|x_k-y_k\|^2.
\]
Using the fact that $f(y_k)=f(x_k)$,
we obtain 
\begin{equation}\label{fxkp1}
f\left(
\frac{x_k+y_k}{2}
\right)
\le
f(x_k)-\frac{\mu}{8}\|x_k-y_k\|^2.
\end{equation}
Plugging the relation
$
x_k-y_k=t_k \nabla f(x_k)$
into \eqref{fxkp1}, we obtain
\[
f\left(
\frac{x_k+y_k}{2}
\right)
\le
f(x_k)-\frac{\mu}{8}t_k^2\|\nabla f(x_k)\|^2.
\]
Using \eqref{bdtkl}, we deduce
\begin{equation}\label{finalxmlfgra}
f\left(
\frac{x_k+y_k}{2}
\right)
\le
f(x_k)-\frac{\mu}{2L^2}\|\nabla f(x_k)\|^2.
\end{equation}
Combining \eqref{fxlxm}
and \eqref{finalxmlfgra}
gives \eqref{ineqpropme}.
\end{proof}

\begin{theorem}[Linear convergence of ME]
Let $(x_k)$ be the sequence
generated by 
ME.
Assume that 
$f$ is $L$-smooth and $\mu$-strongly convex.
Define 
$$
0 \leq \eta=1- \frac{\mu^2}{L^2}<1.
$$
Then for every $k \geq 0$, we have
\begin{equation}\label{convfunc}
f(x_k)-f(x^*) \leq \eta^{k} (f(x_0)-f(x^*))
\end{equation}
and
\begin{equation}\label{convpoints}
\|x_k-x^*\|^2 \leq \frac{L}{\mu}  \eta^{k} \|x_0-x^*\|^2.
\end{equation}
\end{theorem}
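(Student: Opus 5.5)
The plan is to combine Proposition~\ref{prop1mecl} with the Polyak--Łojasiewicz inequality implied by strong convexity, and then use the quadratic growth and $L$-smoothness bounds to pass from function values to distances.

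\textbf{Step 1 (PL inequality).} By $\mu$-strong convexity, for all $x,y$,
$$
f(y)\ge f(x)+\langle\nabla f(x),y-x\rangle+\frac{\mu}{2}\|y-x\|^2 .
$$
This first-order form follows from \eqref{defsc} by dividing by $t$ and letting $t\downarrow 0$. Minimizing the right-hand side over $y$ and taking $y=x^*$ on the left gives
$$
f(x^*)\ge f(x)-\frac{1}{2\mu}\|\nabla f(x)\|^2,
\qquad\text{that is,}\qquad
\|\nabla f(x)\|^2\ge 2\mu\big(f(x)-f(x^*)\big).
$$

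\textbf{Step 2 (contraction of function values).} Insert this into \eqref{ineqpropme}:
$$
f(x_{k+1})-f(x^*)\le f(x_k)-f(x^*)-\frac{\mu}{2L^2}\,2\mu\big(f(x_k)-f(x^*)\big)=\Big(1-\frac{\mu^2}{L^2}\Big)\big(f(x_k)-f(x^*)\big).
$$
Induction on $k$ then gives \eqref{convfunc}.

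We also need $0\le\eta<1$, which amounts to $\mu\le L$. To see this, combine the strong monotonicity \eqref{scprop1} with the Lipschitz continuity of $\nabla f$ via Cauchy--Schwarz:
$$
\mu\|x-y\|^2\le\langle\nabla f(y)-\nabla f(x),\,y-x\rangle\le L\|x-y\|^2 .
$$
The bound $\eta<1$ is immediate since $\mu>0$.

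\textbf{Step 3 (distances).} Two standard bounds are used.
\begin{itemize}
\item Quadratic growth: strong convexity at $x^*$ with $\nabla f(x^*)=0$ gives $f(x_k)-f(x^*)\ge\frac{\mu}{2}\|x_k-x^*\|^2$.
\item Descent lemma: $L$-smoothness at $x^*$ gives $f(x_0)-f(x^*)\le\frac{L}{2}\|x_0-x^*\|^2$.
\end{itemize}
Chaining these with \eqref{convfunc} yields
$$
\frac{\mu}{2}\|x_k-x^*\|^2\le\eta^k\,\frac{L}{2}\|x_0-x^*\|^2 ,
$$
which is exactly \eqref{convpoints}.

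Nothing here is a genuine obstacle. The one point needing care is deriving the first-order strong convexity inequality from the paper's definition \eqref{defsc}, which is done by the limit argument in Step 1. The indexing is also consistent, since Proposition~\ref{prop1mecl} holds for every $k\ge 0$ regardless of which branch of ME is taken.
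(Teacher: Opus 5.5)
Your proof is correct and follows the paper's argument essentially step for step. You combine the per-iteration decrease of Proposition~\ref{prop1mecl} with the Polyak--Lojasiewicz inequality to get the contraction factor $1-\mu^2/L^2$, then use quadratic growth at $x^*$ and the $L$-smoothness bound at $x_0$ to turn the function-value rate into the distance estimate, and you additionally justify two points the paper only asserts as standard: the first-order strong convexity inequality (from \eqref{defsc} by letting $t\downarrow 0$) and $\mu\le L$.
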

\begin{proof}
By Proposition \ref{prop1mecl} we have for all $k \geq 0$,
\begin{equation}\label{ineqpropfundi}
f(x_{k+1})\leq f(x_k)-\frac{\mu}{2L^2}\|\nabla f(x_k)\|^2.
\end{equation}
A standard consequence of $\mu$-strong convexity is
the Polyak-Lojasiewicz
inequality
\[
\|\nabla f(x)\|^2
\ge
2\mu\bigl(f(x)-f(x^*)\bigr)
\]
for every $x \in \mathbb{R}^n$. Plugging this inequality written for $x=x_k$ into \eqref{ineqpropfundi} gives
\[
\begin{aligned}
f(x_{k+1})-f(x^*)
&\le
f(x_k)-f(x^*)
-\frac{\mu}{2L^2}
\|\nabla f(x_k)\|^2 \\
&\le
f(x_k)-f(x^*)
-\frac{\mu^2}{L^2}
\Bigl(f(x_k)-f(x^*)\Bigr).
\end{aligned}
\]
We therefore have
\begin{equation}\label{convfunc0}
f(x_{k+1})-f(x^*) \leq \left(1- \frac{\mu^2}{L^2} \right)  \Bigl(f(x_{k})-f(x^*)\Bigr),
\end{equation}
which immediately implies \eqref{convfunc} by induction.
Also observe that since
$
0<\mu\le L$, the convergence rate 
$\eta=1-\mu^2/L^2$ satisfies
$0 \leq \eta <1$.

We now prove \eqref{convpoints}. 
By strong convexity of $f$ and using the optimality condition $\nabla f(x^*)=0$, we have
\begin{equation}\label{sconvf}
f(x_k)-f(x^*) \geq \langle \nabla f(x^*),x-x^* \rangle + \frac{\mu}{2}\|x_k-x^*\|^2
=\frac{\mu}{2}\|x_k-x^*\|^2.
\end{equation}
Using again that $f$ is $L$-smooth and
the optimality condition $\nabla f(x^*)=0$, we
obtain 
\begin{equation}\label{convlgrad}
f(x_0) - f(x^*) \leq   \langle \nabla f(x^*),x_0-x^* \rangle + \frac{L}{2}\|x_0-x^*\|^2
=\frac{L}{2}\|x_0-x^*\|^2.
\end{equation}
Combining \eqref{sconvf},  \eqref{convfunc0},  and \eqref{convlgrad} gives
$$
\frac{\mu}{2}\|x_k-x^*\|^2 
\leq f(x_k)-f(x^*) \leq \eta^{k}(f(x_0)-f(x^*)) \leq \frac{L}{2}\eta^{k}\|x_0-x^*\|^2
$$
which gives \eqref{convpoints}.
\end{proof}

\section{Numerical experiments}\label{sec:num} 
We test ME with two strongly convex functions $f_1$ and $f_2$ given below. Function $f_1$ is a quadratic function:
\begin{equation}
f_1(x)=\frac{1}{2}x^T A x - b^T x
\end{equation}
with $A$ definite positive
and function $f_2$ is given by
\begin{equation}
f_2(x)=\ln\left( \sum_{i=1}^n e^{\alpha_i x_i^2}\right) + \sum_{i=1}^n \beta_i x_i^2
\end{equation}
for positive weights $\alpha_1,\ldots, \alpha_n$, $\beta_1,\ldots, \beta_n$.
\if{
and 
function $f_3$ is defined as
$$
f_3(x)=e^{\beta x^T A x}+ \sum_{i=1}^n \beta_i x_i^2
$$
for $\beta, \beta_1,\ldots,\beta_n>0$ and $A$ definite positive. 
}\fi

It is clear that $f_1$  is
differentiable and strongly convex. The next lemma shows that
$f_2$ is strongly convex.

\begin{lemma}
Function $f_2:\mathbb{R}^n \rightarrow \mathbb{R}$ is differentiable and strongly convex.
\end{lemma}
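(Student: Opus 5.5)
We split $f_2$ into two pieces and show each is convex or strongly convex. Write $f_2 = g + q$, where
\[
g(x)=\ln\Big(\sum_{i=1}^n e^{\alpha_i x_i^2}\Big), \qquad q(x)=\sum_{i=1}^n \beta_i x_i^2 .
\]

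\textbf{Differentiability.} Both $g$ and $q$ are compositions of smooth maps: exponentials, squares, and a logarithm of a strictly positive quantity. Hence $f_2$ is $C^\infty$.

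\textbf{Strong convexity of $q$.} $q$ is a quadratic with Hessian $\mathrm{diag}(2\beta_1,\ldots,2\beta_n)$. This Hessian is bounded below by $2\beta_{\min} I$, where $\beta_{\min}=\min_i\beta_i>0$. So $q$ is strongly convex with constant $\mu=2\beta_{\min}$ for the Euclidean norm. Equivalently, one can verify \eqref{defsc} directly using the identity
\[
t\|x\|^2+(1-t)\|y\|^2-\|tx+(1-t)y\|^2=t(1-t)\|x-y\|^2
\]
applied coordinatewise with weights $\beta_i$.

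\textbf{Convexity of $g$ (the main step).} We use that log-sum-exp $\mathrm{lse}(s)=\ln\sum_i e^{s_i}$ is convex and nondecreasing in each argument. Convexity follows from H\"older's inequality:
\[
\sum_i e^{ts_i+(1-t)r_i}\le \Big(\sum_i e^{s_i}\Big)^t\Big(\sum_i e^{r_i}\Big)^{1-t}.
\]
Monotonicity is immediate. Each inner map $x\mapsto \alpha_i x_i^2$ is convex because $\alpha_i>0$. A nondecreasing convex function of convex functions is convex. Concretely, for $z=tx+(1-t)y$ we have $\alpha_i z_i^2\le t\alpha_i x_i^2+(1-t)\alpha_i y_i^2$. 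Applying monotonicity and then convexity of lse gives $g(z)\le t g(x)+(1-t)g(y)$.

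\textbf{Conclusion.} Adding the convexity inequality for $g$ to the strong convexity inequality for $q$ yields \eqref{defsc} for $f_2$ with $\mu=2\min_i\beta_i$.

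The only step that is not routine is the convexity of lse. If preferred, it can instead be checked via the Hessian $\mathrm{diag}(p)-pp^T$ with $p$ the softmax vector, which is positive semidefinite by Cauchy--Schwarz.
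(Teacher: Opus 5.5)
Your proof is correct and follows the paper's route: you split off $\sum_i \beta_i x_i^2$, show that $\ln\big(\sum_i e^{\alpha_i x_i^2}\big)$ is convex because log-sum-exp is convex and nondecreasing and is composed with the convex maps $x \mapsto \alpha_i x_i^2$, and then add the two pieces. The differences are minor: you prove convexity of log-sum-exp via H\"older's inequality instead of the paper's Hessian and Cauchy--Schwarz argument (which you also mention), and you make explicit what the paper leaves implicit, namely differentiability and the strong convexity constant $\mu = 2\min_i \beta_i$ contributed by the quadratic part.
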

\begin{proof} We first prove
that 
$$
f(x)=\ln(\sum_{i=1}^n e^{x_i})
$$
is convex. Setting $a=e^x$, $S=\sum_{i=1}^n a_i$, we compute
$\frac{\partial f}{\partial x_i}(x)=\frac{e^{x_i}}{S}$, 
$\nabla^2 f(x)=\frac{\mbox{Diag}(a)}{S}-\frac{a a^T}{S^2}$ and for every $y$ we have
$$
y^T \nabla^2 f(x) y = \frac{(\sum_{k=1}^n a_k) (\sum_{k=1}^n a_k y_k^2 )  -  (\sum_{k=1}^n a_k  y_k )^2}{S^2}
$$
and by Cauchy-Schwartz inequality
$$
(\sum_{k=1}^n y_k a_k)^2 \leq (\sum_{k=1}^n a_k)(\sum_{k=1}^n y_k^2 a_k)
$$
implying that $y^T \nabla^2 f(x) y  \geq 0 $ for every $y$. We have therefore
shown that $f$ is convex. 

We now prove that $f_2$ is strongly convex.
It suffices to show that
$$
f_0(x):=f(\alpha_1 x_1^2,\ldots,\alpha_n x_n^2)
$$
is convex.
For $0 \leq \theta \leq 1$ and $x,y \in \mathbb{R}^n$, we have
\begin{align}
f_0(\theta x + (1-\theta)y)& =
f(\alpha_1 (\theta x_1 + (1-\theta)y_1)^2,\ldots,\alpha_n (\theta x_n + (1-\theta)y_n)^2)\\ 
& \leq f(\alpha_1(\theta x_1^2+(1-\theta)y_1^2),\ldots,\alpha_n(\theta x_n^2+(1-\theta)y_n^2))\\
& \leq \theta f(\alpha_1 x_1^2,\ldots,\alpha_n x_n^2) + (1-\theta)f(\alpha_1 y_1^2,\ldots,\alpha_n y_n^2)\\
¨&=\theta f_0(x) + (1-\theta)f_0(y)
\end{align}
where in the first inequality above we have used
the convexity of function $x^2$ (from $\mathbb{R}$ to $\mathbb{R}$), the nonnegativity of weights $\alpha_i$, and the monotonicity of $f$ (if $x_i \leq y_i$
for $i=1,\ldots,n$ then 
$f(x_1,\ldots,x_n)\leq f(y_1,\ldots,y_n)$) and in the second inequality we have used convexity of $f$.
\end{proof}

We compare in Table \ref{tableres1} for function $f_1$
and in Table  \ref{tableres2} for function $f_2$
the number of iterations
and the optimal value at termination
for ME, Barzilai-Borwein (BB) with long steps,
and 
Barzilai-Borwein with short steps stopping the methods at the first
iterate $x^k$ satisfying  $\|\nabla f(x^k)\| \leq \varepsilon$ with
$\varepsilon=0.01$.
The methods are run for several problem sizes:
$n \in \{100, 1000, 2000, 4000, 6000, 8000\}$
for function $f_1$ and  
$n \in \{100, 1000, 2000, 3000, 4000, 6000, 8000, 50000, 100000\}$
for function $f_2$.
For every problem size $n$, we generate randomly 10 instances of size $n$ and the number of iterations and
optimal value reported in the tables are the mean number of iterations and mean optimal value over these
10 instances. It can be seen on these experiments that ME
requires much less iterations than 
BB with long or short steps to satisfy the stopping criterion.
These are encouraging first numerical results on ME optimizer
to minimize strongly convex functions.

\begin{table}[H]
\centering
\begin{tabular}{|c|c|c|c|}
 \hline
Method & n &     Iterations & Optimal value \\
 \hline
 ME &100& 2    &	-524.6\\
 \hline
Barzilai-Borwein long steps &100&   7  &	-524.6  	\\
 \hline
Barzilai-Borwein short steps  &100&  4   &	-524.6\\
 \hline
 \hline
 ME &1000& 2    &	-751.4	\\
 \hline
Barzilai-Borwein long steps &1000&   7  &	-751.4  	\\
 \hline
Barzilai-Borwein short steps  &1000&  5   &	-751.4\\
 \hline
 \hline
  ME &2000&  2   &-1004.1	\\
 \hline
Barzilai-Borwein long steps &2000&   7  &	-1004.1  	\\
 \hline
Barzilai-Borwein short steps  &2000& 5    &-1004.1\\
 \hline
 \hline
  ME &3000&   2  &	-1258.1	\\
 \hline
Barzilai-Borwein long steps &3000&   7  &	-1258.1  	\\
 \hline
Barzilai-Borwein short steps  &3000& 5    &	-1258.1\\
 \hline
 \hline
  ME &4000&   2  &	-1496.7	\\
 \hline
Barzilai-Borwein long steps &4000&   7  &	-1496.7  	\\
 \hline
Barzilai-Borwein short steps  &4000& 5    &	-1496.7\\
 \hline
 \hline
  ME &6000& 2    &	-2019.6	\\
 \hline
Barzilai-Borwein long steps & 6000 & 7    &	-2019.6  	\\
 \hline
Barzilai-Borwein short steps  &6000  & 5.8    &	-2019.6\\
 \hline
 \hline
  ME &8000& 2    &	-2507.1	\\
 \hline
Barzilai-Borwein long steps & 8000 & 7    &	-2507.1 	\\
 \hline
Barzilai-Borwein short steps  &8000  & 6.6    &	-2507.1\\
 \hline
 \hline
 \end{tabular}
 \vspace{0.5cm}
\caption{Comparison  of the number of iterations
and of the optimal value at termination
for ME, Barzilai-Borwein with long steps,
and 
Barzilai-Borwein with short steps using $\varepsilon=0.01$ to minimize function $f_1$ for several problem size $n$. For every problem size $n$, we generate randomly 10 instances of size $n$ and the number of iterations and
optimal value reported in the table are the mean number of iterations and mean optimal value over these
10 instances.}\label{tableres1}
\end{table}

\begin{table}[H]
\centering
\begin{tabular}{|c|c|c|c|}
 \hline
Method & n &     Iterations & Optimal value \\
 \hline
 ME &100& 2    &		4.61\\
 \hline
Barzilai-Borwein long steps &100&   7  &	4.61  	\\
 \hline
Barzilai-Borwein short steps  &100&  4   &	4.61\\
 \hline
 \hline
 ME &1000& 2    &	6.91	\\
 \hline
Barzilai-Borwein long steps &1000&   7  &	6.91  	\\
 \hline
Barzilai-Borwein short steps  &1000&  4   &	6.91\\
 \hline
 \hline
  ME &2000&  2   &	8,0	\\
 \hline
Barzilai-Borwein long steps &2000&   7  &	8,0  	\\
 \hline
Barzilai-Borwein short steps  &2000& 4    &	8,0\\
 \hline
 \hline
  ME &3000&   2  &	8.1	\\
 \hline
Barzilai-Borwein long steps &3000&   7  &	8.1  	\\
 \hline
Barzilai-Borwein short steps  &3000& 4    &	8.1\\
 \hline
 \hline
  ME &4000&   2  &	8.3	\\
 \hline
Barzilai-Borwein long steps &4000&   7  &	8.3  	\\
 \hline
Barzilai-Borwein short steps  &4000& 4.1    &	8.3\\
 \hline
 \hline
  ME &6000& 2    &	8.7	\\
 \hline
Barzilai-Borwein long steps & 6000 & 7    &	8.7  	\\
 \hline
Barzilai-Borwein short steps  &6000  & 4    &	8.7\\
 \hline
 \hline
  ME &8000& 2    &	9.0	\\
 \hline
Barzilai-Borwein long steps & 8000 & 7    &	9.0  	\\
 \hline
Barzilai-Borwein short steps  &8000  & 4    &	9.0\\
 \hline
 \hline
  ME & 50000 & 2    &	10.8	\\
 \hline
Barzilai-Borwein long steps & 50000 & 7    &	10.8  	\\
 \hline
Barzilai-Borwein short steps  &50000  & 4.5    &	10.8\\
 \hline
 \hline
   ME & 100000 & 2    &	11.5	\\
 \hline
Barzilai-Borwein long steps & 100000 & 6.6    &	11.5  	\\
 \hline
Barzilai-Borwein short steps  &100000  & 5    &	11.5\\
 \hline
 \hline
 \end{tabular}
 \vspace{0.5cm}
\caption{Comparison  of the number of iterations
and of the optimal value at termination
for ME, Barzilai-Borwein with long steps,
and 
Barzilai-Borwein with short steps using $\varepsilon=0.01$ to minimize function $f_2$ for several problem size $n$. For every problem size $n$, we generate randomly 10 instances of size $n$ and the number of iterations and
optimal value reported in the table are the mean number of iterations and mean optimal value over these
10 instances.}\label{tableres2}
\end{table}

\if{
\begin{table}
\centering
\begin{tabular}{|c|c|c|c|}
 \hline
Method & n &     Iterations & Optimal value \\
 \hline
 ME &100& 11    &		1.0\\
 \hline
Barzilai-Borwein long steps &100&   9  &	1.0  	\\
 \hline
Barzilai-Borwein short steps  &100&  5.5   &	1.0\\
 \hline
 \hline
 ME &1000& 2    &	1,0	\\
 \hline
Barzilai-Borwein long steps &1000&   7.5  &	1.0  	\\
 \hline
Barzilai-Borwein short steps  &1000&  7  &	1.0\\
 \hline
 \hline
  ME &2000&  11.9   &	1.0	\\
 \hline
Barzilai-Borwein long steps &2000&   7.5  &	1.0  	\\
 \hline
Barzilai-Borwein short steps  &2000& 6.2    &	1.0\\
 \hline
 \hline
  ME &3000&   11.9  &	1.0	\\
 \hline
Barzilai-Borwein long steps &3000&   6.6  &	1.0  	\\
 \hline
Barzilai-Borwein short steps  &3000& 6.8    &	1.0\\
 \hline
 \hline
  ME &4000&   11.9  &	1.0	\\
 \hline
Barzilai-Borwein long steps &4000&   6  &	1.0  	\\
 \hline
Barzilai-Borwein short steps  &4000& 6.8    &	1.0\\
 \hline
 \hline
  ME &6000& 11.9    &	1.0	\\
 \hline
Barzilai-Borwein long steps & 6000 & 6.3   &	1.0  	\\
 \hline
Barzilai-Borwein short steps  &6000  & 7.5    &	1.0\\
 \hline
 \hline
  ME &8000& 2    &	1.0	\\
 \hline
Barzilai-Borwein long steps & 8000 & 6.3    &	1.0  	\\
 \hline
Barzilai-Borwein short steps  &8000  & 7.9    &	1.0\\
 \hline
 \hline
 \end{tabular}
 \vspace{0.5cm}
\caption{Comparison  of the number of iterations
and of the optimal value at termination
for ME, Barzilai-Borwein with long steps,
and 
Barzilai-Borwein with short steps using $\varepsilon=0.01$ to minimize function $f_3$ for several problem size $n$. For every problem size $n$, we generate randomly 10 instances of size $n$ and the number of iterations and
optimal value reported in the table are the mean number of iterations and mean optimal value over these
10 instances.}\label{tableres3}
\end{table}

}\fi

\if{
\begin{proof}[\textbf{ChatGPT}]
For a vector $x=(x_1,\ldots,x_n)$ in $\mathbb{R}^n$, let $\mathrm{diag}(x)$ be the 
$n \times n$
diagonal matrix
whose entry in position $(i,i)$ is $x_i$
and for vectors $x,y \in \mathbb{R}^n$, define
$x \odot y$ as the vector in $\mathbb{R}^n$
such that $(x \odot y)_i = x_i y_i$.
Let $S(x)=\sum_{i=1}^n e^{\alpha_i x_i^2}$ and $w_i(x)=e^{\alpha_i x_i^2}/S(x)$.
Observe that
$w_i(x)>0$ and $\sum_{i=1}^n w_i(x)=1$.
By differentiation,
\[
\nabla f(x)=2\,\mathrm{diag}(\alpha\odot w(x))\,x+2\,\mathrm{diag}(\beta)\,x,
\]
and
\[
\nabla^2 f(x)=2\,\mathrm{diag}(\alpha\odot w(x))
+4\,\mathrm{diag}(\alpha\odot x)\big(\mathrm{diag}(w(x))-w(x)w(x)^\top\big)\mathrm{diag}(\alpha\odot x)
+2\,\mathrm{diag}(\beta).
\]
Since $\mathrm{diag}(w)-ww^\top\succeq 0$ for any probability vector $w$, the first two terms are positive semidefinite. Adding $2\,\mathrm{diag}(\beta)\succeq 0$ yields $\nabla^2 f(x)\succeq 0$ for all $x$, proving convexity.

For strong convexity, since $\min_i\beta_i>0$ then
$\nabla^2 f(x)\succeq 2\,\mathrm{diag}(\beta)\succeq 2(\min_i\beta_i)I$ for all $x$, so $f$ is $2\min_i\beta_i$--strongly convex. 
    
\end{proof}

}\fi

\section{Conclusion}\label{sec:conc}

In this work, we introduced the Method of Ellipcenters
for minimizing a strongly convex differentiable objective
function.

As a future work, ME could be extended for nondifferentiable 
and/or constrained optimization problems.\\

\par {\textbf{Funding.}} Roger Behling was partially supported by Conselho Nacional de Desenvolvimento Científico e Tecnológico (CNPq) Grant 309458/2025-0 and Fundacao de Amparo a Pesquisa e Inovacao do Estado de Santa Catarina (FAPESC) Grant 2024TR002238. Vincent Guigues was partially supported by CNPq and acknowledges the support of CNPq grant 305263/2023-4. Eduarda Ferreira Zanatta was supported by Fundação de Amparo à Pesquisa do Estado de São Paulo (FAPESP) grant 2025/21077-9.\\

\par {\textbf{Competing interests.}} The authors have no competing interests to declare.

\if{
Because $f(x^*)< f(x^{k+1})<f(x^k)$ for all $k$, $\lim_{k \rightarrow \infty} f(x^k) = f(\bar{x})$ for some $\bar{x} \in \mathbb{R}^n$. Therefore, since $$(f(x^{k+1})-f(x^*))-(f(x^0)-f(x^*))=\sum_{k=0}^k (f(x^n)-f(z^n))+(f(z^n)-f(x^{n+1}))$$ we have that $$\sum_{k=0}^{\infty}(f(x^n)-f(z^n))+(f(z^n)-f(x^{n+1}))< \infty$$
so $\lim_{n \rightarrow \infty} (f(x^n)-f(z^n))=0$.
}\fi


\begin{thebibliography}{99}

\bibitem{ourME}
Roger Behling and Ramyro Correa Aquines and Eduarda Ferreira Zanatta and Vincent Guigues.  
\newblock Introducing the method of ellipcenters, a new first order technique for unconstrained optimization.  
\newblock {\em arXiv}, 2025.

\bibitem{BarzilaiBorwein1988}
Barzilai, Jonathan and Borwein, Jonathan M.  
\newblock Two-Point Step Size Gradient Methods.  
\newblock {\em IMA Journal of Numerical Analysis}, 8(1):141--148, 1988.

\bibitem{fista}
Beck, Amir and Teboulle, Marc  
\newblock A Fast Iterative Shrinkage-Thresholding Algorithm
 for Linear Inverse Problems   
\newblock {\em SIAM J. IMAGING SCIENCES}, 2 (1):183-202, 2009.

\bibitem{Cauchy1847}
Cauchy, Augustin-Louis.  
\newblock Méthode générale pour la résolution des systèmes d'équations simultanées.  
\newblock {\em Comptes Rendus Hebdomadaires des Séances de l'Académie des Sciences}, 25:536--538, 1847.

\bibitem{Fletcherroger87}
Fletcher, R.  
\newblock Practical Methods of Optimization (2nd ed.)  
\newblock {\em New York: John Wiley \& Son, 1987}.

\bibitem{FletcherReeves1964}
Fletcher, R. and Reeves, C. M.  
\newblock Function Minimization by Conjugate Gradients.  
\newblock {\em The Computer Journal, 7(2):149--154, 1964}.

\bibitem{gonzagakaras}
Gonzaga, C. and Karas. E.
\newblock Fine tuning Nesterov’s steepest descent algorithm for differentiable convex programming
\newblock {\em Mathematical Programming}, 138:141--166,2013.

\bibitem{grimmer2023optimal}
Grimmer, B.
\newblock On optimal universal first-order methods for minimizing heterogeneous sums.  
\newblock {\em Optimization Letters, 1--19, 2023}. 


\bibitem{Khachiyan1979}
Khachiyan, Leonid G.  
\newblock A polynomial algorithm in linear programming.  
\newblock {\em Soviet Mathematics Doklady}, 20:191--194, 1979.

\bibitem{lan2015bundle}
Lan, G.
\newblock Bundle-level type methods uniformly optimal for smooth and nonsmooth convex optimization  
\newblock {\em Mathematical Programming}, 149(1-2):1--45,2015.

\bibitem{liang2021average}
Liang, J. and Monteiro, R. D. C.
\newblock An average curvature accelerated composite gradient method for nonconvex smooth composite optimization problems  
\newblock {\em SIAM Journal on Optimization}, 31(1):217--243,2021.

\bibitem{liang2023average}
Liang, J. and Monteiro, R. D. C.
\newblock Average curvature FISTA for nonconvex smooth composite optimization problems 
\newblock {\em Computational Optimization and Applications}, 86(1):275--302,2023.

\bibitem{mishchenko2020adaptive}
Mishchenko, K. and Malitsky, Y.
\newblock Adaptive gradient descent without descent 
\newblock {\em 37th International Conference on Machine Learning (ICML 2020)},2020.

\bibitem{nesterov1983}
Nesterov, Y.  
\newblock A method of solving a convex programming problem with convergence rate $O(1/k^2)$.  
\newblock {\em Soviet Mathematics Doklady}, 27(2):372--376, 1983.

\bibitem{nesterov2015universal}
Nesterov, Y.
\newblock Universal gradient methods for convex optimization problems    
\newblock {\em Mathematical Programming}, 152(1):381--404,2015.

 
\bibitem{renegar2022simple}
Renegar, J. and Grimmer, B.
\newblock A simple nearly optimal restart scheme for speeding up first-order methods  
\newblock {\em Foundations of Computational Mathematics}, 22(1):211--256,2022. 


\bibitem{Wolfe1969}
Wolfe, Philip.  
\newblock Convergence Conditions for Ascent Methods.  
\newblock {\em SIAM Review}, 11(2):226--235, 1969.

\bibitem{zhou2024adabb}
Zhou, D. and Ma, S. and Yang, J.
\newblock Ada{BB}: Adaptive {B}arzilai-{B}orwein method for convex optimization 
\newblock {\em Mathematics of Operations Research}, 2025.


\end{thebibliography}
\end{document}